\newtheorem{Theorem}{Theorem}[section]
\newtheorem{Corollary}[Theorem]{Corollary}
\newtheorem{Lemma}[Theorem]{Lemma}
\theoremstyle{remark}
\newtheorem{Remark}[Theorem]{Remark}
\theoremstyle{definition}
\newtheorem{definition}[Theorem]{Definition}
\newtheorem{example}[Theorem]{Example}
\newcommand{\bq}{\begin{equation}}
	\newcommand{\eq}{\end{equation}}
\newcommand{\beqn}{\begin{eqnarray*}}
	\newcommand{\eeqn}{\end{eqnarray*}}
\newcommand{\beq}{\begin{eqnarray}}
	\newcommand{\eeq}{\end{eqnarray}}
\newcommand{\rar}{\rightarrow}
\newcommand{\bc}{\begin{centre}}
	\newcommand{\ec}{\end{centre}}
\newcommand{\ba}{\begin{array}}
	\newcommand{\ea}{\end{array}}
\newcommand{\eof}{\hfill{{\rule{1.5mm}{1.5mm}}}}
\newcommand{\inp}[2]{\langle{#1},\,{#2} \rangle}
\renewcommand{\Delta}{{\nabla}}
\newcommand*{\childn}[2]{{\mathsf{Chi}}^{\langle#1\rangle}(#2)}
\newcommand*{\Ge}{\geqslant}
\newcommand*{\Le}{\leqslant}
\begin{document}
\title{Ces\`{a}ro Operators on Rooted Directed Trees}

\author[M. Abhinand]{Mankunikuzhiyil Abhinand}
\address{Indian Statistical Institute,
Statistics and Mathematics Unit,
8th Mile, Mysore Road,
Bangalore 560059, India.}
\email{abhinandmkrishnan@gmail.com}

\author[S. Chavan]{Sameer Chavan}
\address{Department of Mathematics and Statistics\\
	Indian Institute of Technology Kanpur, India.}
\email{chavan@iitk.ac.in}

\author[Sophiya S. Dharan]{Sophiya S. Dharan}
\address{Government Polytechnic College, Kalamassery, Kerala, India.}
\email{sophysasi@gmail.com}

\author[T. Prasad]{Thankarajan Prasad}
\address{Department of Mathematics, University of Calicut, Kerala-673635, India.}
\email{prasadvalapil@gmail.com}
\date{}

\subjclass[2020]{Primary 47B37, 47B20; Secondary 47A10, 05C20}
\keywords{rooted directed tree, Ces\'aro operator, subnormal operator, spectrum}

\maketitle

\begin{abstract} In this paper, we introduce and investigate the notion of the Ces\'aro operator $C_{\mathscr T}$ on a rooted directed tree $\mathscr T.$ When $\mathscr T$ is the rooted tree with no branching vertex, then $C_{\mathscr T}$ is unitarily equivalent to the classical Ces\'aro operator $C_{0}$ on the sequence space $\ell^2(\mathbb N).$ We prove that for every narrow rooted directed tree $\mathscr T$, $C_{\mathscr T}$ is bounded, with norm bounded above by twice the width of 
$\mathscr T.$ 
When the tree is not narrow, this boundedness result no longer holds.
Beyond several spectral properties, assuming $\mathscr T$ is leafless and narrow, we show that $C_{\mathscr T}$ is subnormal if and only if $\mathscr T$ is isomorphic to the rooted directed tree without any branching vertex. In particular, this demonstrates that the verbatim analogue of Kriete-Trutt theorem fails in the context of Ces\'aro operators on rooted directed trees. Nonetheless, under the same hypotheses, 
$C_{\mathscr T}$ is always a compact perturbation of a subnormal operator. 
\end{abstract}
	
\section{Introduction}

As mentioned in \cite{R2024}, there has been renewed interest in the classical Ces\'aro operator $C_0$ first explored in \cite{HPL1934} and systematically studied in  \cite{BHS1965} (for its generalizations as of late, see \cite{GP2024, K2021, MR2023, P2008}). The purpose of this paper is to discuss yet another generalization of $C_0$, which is equally motivated by some recent developments in the graph-theoretic operator theory (see \cite{BJJS2017, CGS2023, CPT2017, JJS2012}).  

Let $\mathbb N, \mathbb{Z}_{+}$, $\mathbb{Z}$, $\mathbb{R}$ and $\mathbb{C}$ denote the set of all positive integers, non-negative integers, integers, real numbers and complex numbers, respectively. For a set $X$, $\mathrm{card}(X)$ denotes the cardinality of $X$.
For $\lambda_0 \in \mathbb C$ and $r > 0,$ let $\mathbb D_r(\lambda_0)$ denote the open disc $\{\lambda \in \mathbb C: |\lambda-\lambda_0| < r\}.$
Let $\mathcal{H}$ be a complex Hilbert space and $\mathcal{B}(\mathcal{H})$ denote the $C^*$-algebra of all bounded linear operators on $\mathcal{H}$. For a nonempty subset $M$ of $\mathcal H,$ let $\mbox{span}\,M$ and $\bigvee M$ denote the linear span and closed linear span of $M$ in $\mathcal H,$ respectively. Let $T \in \mathcal{B}(\mathcal{H})$. The Hilbert space adjoint of $T$ is denoted by $T^{\ast}.$ The notation $\ker(T)$ is reserved for the kernel of $T$.
Let $\sigma_{p}(T)$, $\sigma_w(T)$ and $\sigma (T)$ denote the point spectrum, Weyl spectrum and spectrum of $T$, respectively.
Given $S \in \mathcal{B}(\mathcal{H})$ and $T \in \mathcal{B}(\mathcal{K}),$ we say that $S$ is {\it essentially equivalent} to $T$ if there exists a unitary operator $U : \mathcal H \rar \mathcal K$ and compact operator $K \in \mathcal{B}(\mathcal{H})$ such that $S = U^*TU + K.$
For $A, B \in \mathcal{B}(\mathcal{H}),$ let $[A, B] = AB-BA$ denote the {\it cross commutator} of $A$ and $B.$ 	
An operator $T \in \mathcal{B}(\mathcal{H})$ is called \textit{normal} if $[T^{\ast}, T] = 0,$ {\it essentially normal} if $[T^{\ast}, T]$ is a compact operator, and \textit{hyponormal} if $[T^{\ast}, T] \Ge 0.$ 
An operator $S \in \mathcal{B}(\mathcal{H})$ is
said to be {\it subnormal} if there exist a Hilbert
space $\mathcal K$ containing $\mathcal H$ (an isometric embedding) and a normal operator
$N\in \mathcal{B}(\mathcal{K})$ such that $Nh = Sh$ for every $h \in
\mathcal H.$ Any subnormal operator on a finite dimensional Hilbert space is normal. 
The reader is referred to \cite{C1991} for the basic properties of subnormal and related operators. It is important to note that, in an infinite-dimensional separable Hilbert space, the class of essentially normal operators and the class of operators that are essentially equivalent to subnormal ones do not contain one another (see \cite[Corollaries 1.5.3 $\&$ 5.2.3]{CM2021}).

Let $\ell^2(\mathbb{N})$ denote the complex Hilbert space of all square-summable complex sequences on $\mathbb{N}$. The \textit{Ces\'{a}ro operator} $C_0$ is formally defined by 
\begin{eqnarray} \label{C0}
C_0 (x_1, x_2, x_3, \ldots) := \Big(x_1, \frac{x_1+x_2}{2},  \frac{x_1 + x_2 + x_3}{3}, \ldots \Big).
\end{eqnarray}
It turns out that $C_0$ extends as a bounded linear operator on $\ell^2(\mathbb{N}).$ Indeed, we have the following (the reader is referred to \cite{R2024} for a survey of the various aspects of the Ces\'{a}ro operator):
\begin{Theorem}\label{THM1} \cite[Theorem~9.8.26]{HPL1934}, \cite[Theorems~1-2]{BHS1965}, \cite[Theorems~1 $\&$ 4 and Lemma~1]{KT1971}.
The Ces\'{a}ro operator $C_0$ defines a bounded linear operator on $\ell^2(\mathbb N)$ with operator norm equal to $2.$ Moreover, we have the following statements$:$
\begin{enumerate}
\item[(a)] $\sigma_p(C_0)=\emptyset,$ $\sigma(C_0)=\overline{\mathbb D_1(1)}$, 
\item[(b)] if $x_{\lambda}(0)=1$ and $x_{\lambda}(n)=\prod_{j=1}^n\Big(1-\frac{1}{j\lambda}\Big),$ $n \Ge 1,$ then $$\ker(C^{\ast}_0 - \lambda I)= \begin{cases} \mathrm{span}\{x_{\lambda}\} & \mathrm{if~}  
\lambda \in \mathbb D_1(1), \\
\{0\} & \mathrm{otherwise},
\end{cases}
$$
\item[(c)] $C_0$ is a cyclic subnormal operator.
\end{enumerate}
\end{Theorem}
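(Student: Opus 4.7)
The plan is to treat the norm, the point spectrum, part (b), the full spectrum, cyclicity, and subnormality in sequence, exploiting recursions induced by the triangular structure of $C_0$ at each step.

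For $\|C_0\| = 2$, I would invoke Hardy's inequality $\sum_n \bigl(\tfrac{1}{n}\sum_{k=1}^n x_k\bigr)^2 \Le 4\sum_n x_n^2$, whose constant is optimal as witnessed by truncations of the borderline sequence $k \mapsto k^{-1/2}$. The emptiness of the point spectrum follows by rewriting $C_0 x = \lambda x$ as $\sum_{k=1}^n x_k = n\lambda x_n$ and subtracting the relations at $n-1$ and $n$ to obtain the first-order recursion $(n\lambda - 1)x_n = (n-1)\lambda x_{n-1}$, together with the initial equation $(1-\lambda)x_1 = 0$. A short case analysis (on $\lambda = 0$, $\lambda = 1$, $\lambda = 1/m$ for an integer $m \Ge 2$, and the generic case) then shows that every solution is either identically zero or of polynomial growth, hence not in $\ell^2(\mathbb N)$.

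For (b), the adjoint acts as $(C_0^* y)_k = \sum_{n \Ge k} y_n/n$, so the difference of the $k$-th and $(k+1)$-th instances of $C_0^* y = \lambda y$ telescopes to $y_{n+1} = (1 - \tfrac{1}{n\lambda})y_n$ with $y_1$ free; up to rescaling this is exactly $x_\lambda$. It remains to verify that the formal solution also satisfies the original (untelescoped) equation, which boils down to convergence of the tail series $\sum_{n \Ge k} y_n/n$ and is automatic on $\mathbb D_1(1)$. From the Taylor expansion $\log|1 - \tfrac{1}{j\lambda}|^2 = -\tfrac{2\,\re(1/\lambda)}{j} + O(j^{-2})$ one reads off $|x_\lambda(n)|^2 \asymp n^{-2\,\re(1/\lambda)}$, hence $x_\lambda \in \ell^2(\mathbb N)$ iff $\re(1/\lambda) > \tfrac12$, which is precisely the condition $\lambda \in \mathbb D_1(1)$.

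Part (b) immediately yields $\mathbb D_1(1) \subseteq \sigma_p(C_0^*)$, whence $\overline{\mathbb D_1(1)} \subseteq \sigma(C_0)$ by closedness of the spectrum combined with the conjugation-symmetry of $\overline{\mathbb D_1(1)}$. For the reverse inclusion, I would write down an explicit formula for $(C_0 - \lambda I)^{-1}$ when $\lambda$ lies outside this disc, by solving the corresponding lower-triangular system and bounding the result via a Schur test, thereby establishing boundedness of the resolvent off $\overline{\mathbb D_1(1)}$. Cyclicity of $C_0$ reduces to showing that $e_1$ is cyclic, which can be read off from either the subnormal model below or a direct argument on the iterates $C_0^n e_1$ using the distinct diagonal entries $\{1/n\}$ of the triangular matrix of $C_0$. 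The main obstacle is the subnormality assertion in (c); here I would follow the Kriete-Trutt strategy of identifying $C_0$ up to unitary equivalence with multiplication by the coordinate function $z$ on a Hilbert space of functions analytic on $\mathbb D_1(1)$, realized as a closed subspace of $L^2(\mu)$ for a carefully chosen Borel measure $\mu$ supported on $\overline{\mathbb D_1(1)}$, in which the eigenfunctions $x_\lambda$ appear (up to normalization) as reproducing kernels. The sought-after normal extension is then multiplication by $z$ on all of $L^2(\mu)$, and the substantive technical work is the construction of $\mu$ together with verification of the isomorphism, as carried out in \cite{KT1971}.
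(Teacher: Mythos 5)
The paper does not prove this theorem at all: it is imported verbatim from the cited sources (Hardy--Littlewood--P\'olya for the norm, Brown--Halmos--Shields for the spectrum and hyponormality, Kriete--Trutt for parts (b) and (c)) and used as a black box throughout, so there is no internal proof to compare yours against. Your outline is a sound reconstruction of those classical arguments: the recursion $(n\lambda-1)x_n=(n-1)\lambda x_{n-1}$ together with the case analysis at $\lambda\in\{0,1\}\cup\{1/m\}$ does rule out $\ell^2$ eigenvectors; the telescoped adjoint relation $y_{n+1}=(1-\tfrac{1}{n\lambda})y_n$ does integrate back to the untelescoped equation because $y_N\to 0$ for $y\in\ell^2$; and $\re(1/\lambda)>\tfrac12$ is exactly $\lambda\in\mathbb D_1(1)$ (you should flag the degenerate points $\lambda=1/m$, where a factor of $x_\lambda$ vanishes and the asymptotic $|x_\lambda(n)|^2\asymp n^{-2\re(1/\lambda)}$ fails, but $x_\lambda$ is then finitely supported and the conclusion survives). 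Two substantive comments. First, the step you leave heaviest --- inverting $C_0-\lambda I$ off $\overline{\mathbb D_1(1)}$ by an explicit resolvent and a Schur test --- is precisely where Brown--Halmos--Shields have a much shorter route: the matrix identity $C_0C_0^{*}=C_0+C_0^{*}-D$ with $D=\mathrm{diag}(1,\tfrac12,\tfrac13,\dots)$ gives $(I-C_0)(I-C_0)^{*}=I-D$, hence $\|I-C_0\|\Le 1$, which yields $\sigma(C_0)\subseteq\overline{\mathbb D_1(1)}$ and $\|C_0\|\Le 2$ simultaneously, with no resolvent estimates and no Hardy inequality; your route is workable but requires genuine product estimates you have not supplied. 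Second, your proposed ``direct argument'' for cyclicity of $e_1$ from the distinct diagonal entries of the lower-triangular matrix is not an argument as it stands (distinct diagonal entries alone do not prevent $e_1$ from lying in a proper invariant subspace; one needs all coefficients of $e_1$ in the relevant eigenvector expansions to be nonzero, or the Kriete--Trutt functional model), and part (c) itself remains, as you acknowledge, a citation of Kriete--Trutt rather than a proof. Since the paper does exactly the same, deferring to those sources is appropriate here, but the cyclicity remark should either be dropped or completed.
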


To introduce the notion of the Ces\'{a}ro operator on a rooted directed tree, we briefly recall some terms from graph-theory (the reader is referred to \cite{JJS2012} for more details). 
A \textit{directed graph} $\mathscr{T} = (V, E)$ is a pair, where $V$ is a nonempty set of \textit{vertices} and $E$ is a subset of $V \times V \setminus \{(v,v) : v \in V\}$ of \textit{edges}. 
For a subset $W$ of $V$ and $n \in \mathbb{Z}_{+}$, we denote $\text{Chi}(W) = \bigcup\limits_{u \in W} \{v \in V : (u,v) \in E\}$  and $\text{Chi}^{\langle n \rangle}(W) = \text{Chi}(\text{Chi}^{\langle n-1 \rangle} (W)).$ Note that,  $\text{Chi}(v) = \text{Chi}(\{v\})$ and $\text{Chi}^{\langle n \rangle}(v) = \text{Chi}(\text{Chi}^{\langle n-1 \rangle} (\{v\}))$, where $v \in V$. Any element in the set $\text{Chi}(v)$ is called \textit{child of $v$}. If there exists a unique vertex $u \in V$ such that $(u,v) \in E$, then $u$ is called the \textit{parent of $v$} and it is denoted by $\text{par}(v)$. For an integer $k \Ge 1,$ the $k$-fold composition of $\text{par}$ with itself is denoted by $\text{par}^k.$ We will write $\text{par}^k(u)$ only when $u \in V$ belongs to the domain of $\text{par}^k.$ We set $\text{par}^0(v)=v$ for $v \in V.$
A vertex $v \in V$ is called the \textit{root} of $\mathscr{T}$ if there is no $u \in V$ such that $(u,v) \in E$. 
Let $\text{Root}(\mathscr{T})$ denote the colletion of all roots of $\mathscr{T}$. Set $V^{\circ} := V \setminus \text{Root}(\mathscr{T}).$
A \textit{directed tree} is a connected directed graph $\mathscr{T} = (V, E)$ without circuits and 
each vertex $v \in V^\circ$ has a parent. 
It turns out that a rooted directed tree has a unique root (see \cite[p.\,10]{JJS2012}).
A directed tree $\mathscr{T}$ is called \textit{rooted} if it has a root. A directed tree $\mathscr{T} = (V, E)$ is called \textit{locally finite} if $\text{card}(\text{Chi}(u))$ is finite for all $u \in V$. A \textit{leafless} directed tree is a directed tree in which every vertex has atleast one child. A rooted directed tree is {\it narrow} if there exists a positive integer $m$ such that
\beq \label{narrow-c}
\mbox{card}(\text{Chi}^{\langle n \rangle}(\mathsf{root})) \,\leqslant\, m, \quad n \Ge 0.
\eeq
The smallest positive integer $m,$ to be denoted by $\mathrm{width}(\mathscr T),$ satisfying \eqref{narrow-c} will be referred to as the {\it width} of $\mathscr T.$

 
Let $\mathscr T=(V, E)$ be a rooted directed tree with root $\mathsf{root}$. Then
\beq 
\label{disjoint}
V = \bigsqcup_{n = 0}^{\infty} \childn{n}{\mathsf{root}}~(\mbox{disjoint union})
\eeq
(see \cite[Proposition 2.1.2]{JJS2012}).
For each $u\in V$, let $\mathrm{dep}(u)$ denote the unique non-negative integer 
such that
$u \in \mathsf{Chi}^{\langle \mathrm{dep}(u)\rangle}(\mathsf{root})$. We refer to $\mathrm{dep}(u)$ as the {\it depth} of $u.$ 
Note that for any $u \in V,$ $\text{par}^j(u)$ is defined for $j= 0, \ldots, \text{dep}(u).$
For a rooted directed tree $\mathscr{T} = (V, E)$, let  
$
V_{\prec} = \{u \in V : \text{card}(\text{Chi}(u)) \Ge 2\}
$ 
denote the set of all \textit{branching vertices}. The \textit{branching index} of $\mathscr{T},$ denoted by $k_{\mathscr{T}},$  is defined by
\beq \label{branch-index}
k_{\mathscr{T}} = 
\begin{cases}
	1 + \sup \{\mathrm{dep}(w) : w \in V_{\prec}\} & \text{if } V_{\prec} \text{ is nonempty,}\\
	0 & \text{otherwise.}
\end{cases}
\eeq
A subtree $\mathscr{P} = (V_{\mathscr{P}}, E_{\mathscr{P}})$ of $\mathscr{T} = (V, E)$ is called a \textit{path} if $\mathsf{root} \in V_\mathscr{P}$ and $\text{card}(\text{Chi}_{\mathscr{P}}(v)) = 1$ for all $v \in V_{\mathscr{P}},$ where $\text{Chi}_{\mathscr{P}}(v)$ denotes the child of $v$ with respect to the subtree $\mathscr{P}.$ 
\begin{Remark} \label{rmk-narrow-branching}
Let $\mathscr T=(V, E)$ be a narrow rooted directed tree. Then
\begin{enumerate}
\item Since 
$\mathrm{Chi}^{\langle j \rangle}(w) \subseteq 
\mathrm{Chi}^{\langle j+\mathrm{dep}(w) \rangle}(\mathsf{root})$ for any integer $j \Ge 0,$ $$\sup_{j \Ge 0}\mathrm{card}(\mathrm{Chi}^{\langle j \rangle}(w)) < \infty, \quad w \in V.$$
\item 
$\mathscr T$ is locally finite, and if in addition $\mathscr T$ is leafless, then $\mathscr T$ is of finite branching index. For an example of narrow rooted directed tree, which is not of finite branching index, see Figure~\ref{fig-narrow}.
\end{enumerate}
\end{Remark}

%
%

\begin{figure}[H]
\begin{center}
\begin{tikzpicture}[>=stealth, node distance=1.8cm]
	
	\node (n1) [circle, fill=black, inner sep=1.8pt] {};
	\node (n2) [circle, fill=black, right of=n1, inner sep=1.8pt] {};
	\node (n3) [circle, fill=black, right of=n2, inner sep=1.8pt] {};
	\node (n4) [circle, fill=black, right of=n3, inner sep=1.8pt] {};
	\node (n5) [circle, fill=black, right of=n4, inner sep=1.8pt] {};
	
	\node (c1) [circle, fill=black, below=1.2cm of n1, inner sep=1.8pt] {};
	\node (c2) [circle, fill=black, below=1.2cm of n2, inner sep=1.8pt] {};
	\node (c3) [circle, fill=black, below=1.2cm of n3, inner sep=1.8pt] {};
	\node (c4) [circle, fill=black, below=1.2cm of n4, inner sep=1.8pt] {};
	\node (c5) [circle, fill=black, below=1.2cm of n5, inner sep=1.8pt] {};
	
	\draw[->] (n1) -- (n2);
	\draw[->] (n2) -- (n3);
	\draw[->] (n3) -- (n4);
	\draw[->] (n4) -- (n5);
	
	\draw[->] (n1) -- (c1);
	\draw[->] (n2) -- (c2);
	\draw[->] (n3) -- (c3);
	\draw[->] (n4) -- (c4);
	
	
	\draw[->, dashed] (n5) -- (c5);
	\draw[->, thick, dashed] (n5) -- ++(1.75,0);
\end{tikzpicture}
\end{center}
	\vskip.3cm
\caption{A narrow tree $\mathscr T=(V, E),$ which is not of finite branching index} \label{fig-narrow}
\end{figure}
\vskip.1cm
\noindent
{\it All the directed trees in this paper are rooted and countably infinite.}
\vskip.1cm

	
For a nonempty set $X,$ let $\ell^2(X)$ denote the Hilbert space of all square summable complex functions on $X$ with the standard  
inner product $$\langle f, g\rangle = \sum_{x \in X}f(x)\overline{g(x)}, \quad f, g \in \ell^2(X).$$
For each $x \in X$, consider the function $e_x : X \to \mathbb{C}$ defined by
$$e_x(y):=
\begin{cases}
1, \quad \text{if } y=x, \\
0, \quad \text{otherwise}.
\end{cases}$$
Note that the set $\{e_x : x \in X\}$ forms an orthonormal basis for $\ell^2(X)$. By the {\it support} of $f \in \ell^2(X),$ we understand the subset $\{x \in X : f(x) \neq 0\}$ of $X.$
\begin{definition}
Let $\mathscr{T} = (V, E)$ be a directed tree.
We define the linear operator $C_\mathscr{T}$ in $\ell^2 (V)$ by
\begin{eqnarray}\label{EQ3}
\mathcal D(C_\mathscr{T}) &:=& \{f \in \ell^2(V) : C_\mathscr{T}f \in \ell^2(V)\}, \notag\\
C_\mathscr{T}f &:=& \Lambda_{\mathscr T}f, \quad f \in \mathcal D(C_\mathscr{T}), 
\end{eqnarray} 
where $\Lambda_\mathscr T$ is the mapping on the functions $f : V \rar \mathbb C$ given by
\beqn
(\Lambda_{\mathscr T}f)(v)
=\frac{1}{\mathrm{dep}(v)+1}\sum\limits_{j=0}^{\mathrm{dep}(v)} f(\text{par}^j(v)), \quad v \in V.
\eeqn
The operator $C_{\mathscr T}$ will be called the {\it Ces\'aro operator} on the directed tree $\mathscr T.$
	\end{definition}


\begin{Remark} 
Since the convergence in $\ell^2(V)$ yields the pointwise convergence, $C_\mathscr{T}$ is a closed linear operator. 
If $e_w \in \mathcal D(C_\mathscr{T}),$ then  
\beq \label{action-C}
C_\mathscr{T}e_w &=& \sum_{v \in V}\frac{1}{\mathrm{dep}(v)+1}\sum\limits_{j=0}^{\mathrm{dep}(v)} e_w(\text{par}^j(v)) e_v \notag \\
&=& \sum_{j=0}^{\infty}\frac{1}{\mathrm{dep}(w)+j+1}\sum_{v \in \mathrm{Chi}^{\langle j \rangle}(w)}e_v, \quad w \in V.
\eeq
Thus, for $w \in V,$ 
$e_w \in \mathcal D(C_\mathscr{T})$ if and only if  \beq \label{bdd-limit0}
\sum_{j=0}^{\infty}\frac{\mathrm{card}(\mathrm{Chi}^{\langle j \rangle}(w))}{(\mathrm{dep}(w)+j+1)^2} < \infty.
\eeq
Thus, if $\mathscr T$ is a narrow directed tree, then by Remark~\ref{rmk-narrow-branching}(1), 
$e_w \in \mathcal D(C_\mathscr{T}).$ 

Assume that $e_w \in \mathcal D(C_\mathscr{T})$ for every $w \in V.$ Then $\mathcal D(C_\mathscr{T})$ 
contains the linear span of $\{e_v : v \in V\},$ and hence  $C_\mathscr{T}$ is a densely defined linear operator. Since
\beqn
\inp{C_\mathscr{T}f}{e_w} &=& \frac{1}{\mathrm{dep}(w)+1}\sum\limits_{j=0}^{\mathrm{dep}(w)} f(\text{par}^j(w))
\\
&=& \Big\langle{f},\,{\frac{1}{\mathrm{dep}(w)+1}\sum_{j=0}^{\mathrm{dep}(w)}
e_{\mathrm{par}^j(w)}}\Big\rangle, \quad f \in \mathcal D(C_\mathscr{T}),
\eeqn
we may conclude that $e_w \in \mathcal D(C^{*}_\mathscr{T})$ and 
\beq \label{action-C*}
C^*_\mathscr{T}e_w = \frac{1}{\mathrm{dep}(w)+1}\sum_{j=0}^{\mathrm{dep}(w)}
e_{\mathrm{par}^j(w)}.
\eeq
This shows that $C^*_\mathscr{T}e_{\mathsf{root}}=e_{\mathsf{root}},$ and consequently, $1 \in \sigma_p(C^*_\mathscr{T}).$
\end{Remark}

If $\mathscr T$ is the rooted tree with no branching vertex, then $C_{\mathscr T}$ is unitarily equivalent to the classical Ces\'aro operator $C_{0}$ on the sequence space $\ell^2(\mathbb N).$ 
\begin{example}
\label{classical-example} 
Consider the directed tree $\mathscr T_1$ with the set of vertices 
$V:=\mathbb{N}$ and $\mathsf{root}=1$. We further require that $\mathsf{Chi}(n)=\{n+1\}$ for
all 
$n \in \mathbb N$. Note that $\mathrm{Chi}^{\langle j \rangle}(n)=\{n+j\}$ for $n, j \in \mathbb N.$ 
Since $\mathrm{dep}(n)=n-1$ and $\text{par}^j(n)=n-j$ for $n \in \mathbb N$ and $0 \Le j \Le n,$ 
it follows that for any $f \in \mathcal D(C_{\mathscr{T}_1}),$ 
\beqn
(C_{\mathscr{T}_1}f)(n) =  
\frac{1}{\mathrm{dep}(n)+1}\sum\limits_{j=0}^{\mathrm{dep}(n)} f(\text{par}^j(n)) = \frac{1}{n}\sum\limits_{j=1}^{n} f(j).
\eeqn  
It turns out that $C_{\mathscr T_1}$ extends as a bounded linear operator on $\ell^2(\mathbb N).$ Indeed, $C_{\mathscr T_1}$ is unitarily equivalent to the Ces\'aro operator $C_0$ (see \eqref{C0}). 
\eof
\end{example}

As in the classical case (see Example~\ref{classical-example}), $C_\mathscr{T}$ extends as a bounded linear operator on $\ell^2(V)$ provided $\mathscr{T} = (V, E)$ is a narrow rooted directed tree. 
Moreover, the following main result of this paper provides a variant of Theorem~\ref{THM1}.
\begin{Theorem} \label{main-thm}
Let $\mathscr{T} = (V, E)$ be a narrow rooted directed tree.
Then the Ces\'{a}ro operator $C_\mathscr{T}$ defines a bounded linear operator on $\ell^2(V)$ satisfying
\beq \label{norm-estimate} 2 \Le \|C_\mathscr{T}\| \Le 2 \sqrt{\mathrm{width}(\mathscr T)}.
\eeq
Moreover, we have
\begin{enumerate}
\item[(a)] $\sigma_p(C_\mathscr{T})=\emptyset$ if and only if $\mathscr T$ is leafless, 
\item[(b)] any $\lambda \in \mathbb D_1(1)$ is an eigenvalue of $C^{\ast}_\mathscr T,$ and if $\mathscr T$ has at least two paths, then $\dim \ker(C^{\ast}_\mathscr T - \lambda I) \Ge 2$ for every
$\lambda \in \mathbb D_1(1) \backslash \{1\}.$ 
\end{enumerate}
Furthermore, if $\mathscr T$ is a leafless rooted directed tree, then
\begin{enumerate}
\item[(c)] $\sigma(C_\mathscr T)=\overline{\mathbb D_1(1)},$ 
\item[(d)] $C_\mathscr{T}$ is subnormal if and only if $\mathscr T$ is isomorphic to the directed tree $\mathscr T_1$,
\item[(e)] $C_\mathscr{T}$ is essentially equivalent to a subnormal operator.
\end{enumerate}
\end{Theorem}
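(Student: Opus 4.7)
The plan is to treat the six claims in the order that best exposes the underlying path structure of a narrow tree.

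For the norm estimate, the upper bound $\|C_\mathscr T\|\le 2\sqrt{\mathrm{width}(\mathscr T)}$ follows from pointwise Cauchy--Schwarz applied to $\Lambda_\mathscr T f$, after which reindexing $v\in\mathrm{Chi}^{\langle j\rangle}(w)$ together with Remark~\ref{rmk-narrow-branching}(1) bounds each inner sum by a constant multiple of $\|C_0 f\|^2$, and $\|C_0\|=2$ (Theorem~\ref{THM1}) closes the argument. For the lower bound, pick any infinite path $\mathscr P\subseteq\mathscr T$ (existing by K\"onig's lemma, as $\mathscr T$ is locally finite by Remark~\ref{rmk-narrow-branching}(2)); formula \eqref{action-C*} shows $\ell^2(\mathscr P)$ is $C_\mathscr T^*$-invariant, and the restriction is unitarily equivalent to $C_0^*$ via Example~\ref{classical-example}, whence $\|C_\mathscr T\|=\|C_\mathscr T^*\|\ge\|C_0^*\|=2$. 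For (a), a leaf $v$ yields the eigenvalue $(\mathrm{dep}(v)+1)^{-1}$ through $C_\mathscr T e_v=(\mathrm{dep}(v)+1)^{-1}e_v$; conversely, if $\mathscr T$ is leafless and $C_\mathscr T f=\lambda f$, subtracting the eigenvalue equation at $v$ from that at $\mathrm{par}(v)$ produces the deterministic recursion
\[
f(v)=\frac{n\lambda}{(n+1)\lambda-1}\,f(\mathrm{par}(v)),\quad n:=\mathrm{dep}(v)\ge 1,
\]
so $f(v)$ depends only on $\mathrm{dep}(v)$ and $f(\mathsf{root})$ through $c_n:=\prod_{k=1}^n k\lambda/((k+1)\lambda-1)$; square-summability together with $N_n:=\mathrm{card}(\mathrm{Chi}^{\langle n\rangle}(\mathsf{root}))\ge 1$ and $\sum|c_n|^2=\infty$ (the same divergence that gives $\sigma_p(C_0)=\emptyset$ classically) forces $f\equiv 0$. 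For (b), any infinite root-path $\mathscr P$ produces an eigenvector $g_\mathscr P^\lambda$ of $C_\mathscr T^*$ supported on $\mathscr P$ with values $g_\mathscr P^\lambda(v_n)=x_\lambda(n)$: formula \eqref{action-C*} and the classical identity $C_0^* x_\lambda=\lambda x_\lambda$ verify the eigenvector equation, while summability reduces to $x_\lambda\in\ell^2(\mathbb N)$, equivalently $\lambda\in\mathbb D_1(1)$. Two distinct root-paths give linearly independent eigenvectors whenever $x_\lambda$ is nonzero at the divergence depth; the countable exceptional $\lambda\ne 1$ are handled by a second construction at the shallowest branching vertex $v$, setting $g(u_1)=1$, $g(u_2)=-1$ on two sibling children of $v$, $g(v)=0$, and propagating the recursion $\sum_i g(u_i)=g(u)(1-((\mathrm{dep}(u)+1)\lambda)^{-1})$ upward to $g\equiv 0$ above $v$ and downward via the classical $x_\lambda$-shape on the subpaths.

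The main structural step is (e), which in turn drives (c). By Remark~\ref{rmk-narrow-branching}(2), a leafless narrow tree has finite branching index $k:=k_\mathscr T$, so $F:=\{v\in V:\mathrm{dep}(v)<k\}$ is finite, and $V\setminus F$ decomposes into disjoint infinite branchless paths $P_1,\ldots,P_p$ with $p\le\mathrm{width}(\mathscr T)$. Formula \eqref{action-C*} shows $\ell^2(F)$ is $C_\mathscr T^*$-invariant, so $\mathcal M:=\ell^2(V\setminus F)=\bigoplus_i\ell^2(P_i)$ is $C_\mathscr T$-invariant, and a direct computation using \eqref{action-C} identifies $C_\mathscr T|_{\ell^2(P_i)}$ with $D_i\,C_0^{(i)}$, where $C_0^{(i)}$ is a copy of the classical Ces\`aro operator on $\ell^2(P_i)\cong\ell^2(\mathbb N_0)$ and $D_i$ is diagonal with entries $(n+1)/(k+n+1)\to 1$. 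Since $I-D_i$ is compact, $C_\mathscr T|_{\ell^2(P_i)}=C_0^{(i)}+K_i$ with $K_i$ compact. Writing $C_\mathscr T$ as a block matrix in $\ell^2(V)=\ell^2(F)\oplus\mathcal M$, the top-left block is finite-dimensional, the off-diagonal block has finite rank, and the bottom-right block is $\bigoplus_i(C_0^{(i)}+K_i)$; hence $C_\mathscr T=N+K$, where $N:=0_F\oplus\bigoplus_i C_0^{(i)}$ is subnormal and $K$ is compact, proving (e). For (c), this decomposition gives $\sigma_e(C_\mathscr T)=\sigma_e(N)=\overline{\mathbb D_1(1)}$; combined with (a) (ruling out eigenvalues of $C_\mathscr T$) and the fact that $C_\mathscr T-\lambda I$ is Fredholm of index zero for $\lambda\notin\sigma_e$, injectivity forces invertibility outside $\overline{\mathbb D_1(1)}$, and the containment from (b) completes $\sigma(C_\mathscr T)=\overline{\mathbb D_1(1)}$.

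For (d), the direction $\mathscr T\cong\mathscr T_1\Rightarrow C_\mathscr T$ subnormal is Example~\ref{classical-example} combined with Theorem~\ref{THM1}(c). For the converse, I plan the stronger statement that $C_\mathscr T$ fails hyponormality whenever $\mathscr T$ has a branching vertex. Using the finite branching index (Remark~\ref{rmk-narrow-branching}(2)), choose a branching vertex $v$ of maximum depth $n:=k_\mathscr T-1$; by maximality, any two of its children $v_1,v_2$ initiate infinite branchless paths. With the test vector $x:=e_{v_1}+e_{v_2}$, formula \eqref{action-C*} telescopes the contributions of the common ancestors of $v_1,v_2$ to give
\[
C_\mathscr T^* x=\tfrac{1}{n+2}(e_{v_1}+e_{v_2})+\tfrac{2}{n+2}\sum_{j=0}^n e_{\mathrm{par}^j(v)},
\]
so $\|C_\mathscr T^* x\|^2=(4n+6)/(n+2)^2$; meanwhile, since $v_1,v_2$ have only path-descendants, formula \eqref{action-C} yields $\|C_\mathscr T x\|^2=2\sum_{m\ge n+2}m^{-2}$. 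An elementary comparison (direct at $n=0$, and via $\sum_{m\ge n+2}m^{-2}<(n+1)^{-1}$ for $n\ge 1$) gives $\|C_\mathscr T^* x\|^2>\|C_\mathscr T x\|^2$, so $[C_\mathscr T^*,C_\mathscr T]\not\ge 0$ and $C_\mathscr T$ is not subnormal. The main obstacle is precisely this hyponormality test: at a branching vertex of non-maximal depth, the subtrees below its children can be bushy enough that $\|C_\mathscr T x\|^2$ inflates past $\|C_\mathscr T^* x\|^2$ and the symmetric test vector no longer detects the obstruction; selecting $v$ at maximum depth forces the subtrees below $v_1,v_2$ to be bare paths, which is what makes the calculation tractable.
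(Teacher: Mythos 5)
Your overall architecture coincides with the paper's: a level-wise reduction to Hardy's inequality for the norm bound, a growth argument along a descending chain for (a), restriction to paths for (b), the ``finite part plus $d$ bare paths'' decomposition realizing $C_\mathscr T$ as a compact perturbation of $0\oplus C_0^{(d)}$ for (c) and (e), and the deepest-branching-vertex test vector $e_{v_1}+e_{v_2}$ with exactly the paper's numerics for (d). Three steps, however, do not work as written. (i) For the upper norm bound, ``pointwise Cauchy--Schwarz'' with equal weights gives $|(\Lambda_\mathscr T f)(v)|^2\Le\frac{1}{n+1}\sum_{j=0}^n|f(\mathrm{par}^j(v))|^2$ at depth $n$, and summing over $v$ and $n$ produces $\sum_k a_k^2\sum_{n\Ge k}\frac{1}{n+1}=\infty$ (where $a_k=\|f\cdot\chi_{\childn{k}{\mathsf{root}}}\|$); what is needed is the triangle inequality in $\ell^2(\childn{n}{\mathsf{root}})$, giving $\|(C_\mathscr Tf)\cdot\chi_{\childn{n}{\mathsf{root}}}\|\Le\frac{\sqrt{l_n}}{n+1}\sum_{j=0}^n a_{n-j}$, followed by Hardy's inequality applied to the scalar sequence $(a_j)$ --- this is where the factor $\sqrt{\mathrm{width}(\mathscr T)}$ actually enters. (ii) In (a), your product $c_n$ is undefined exactly when $(k+1)\lambda=1$ for some $k$, and $\lambda=\frac{1}{m+1}$ are the \emph{only} candidate eigenvalues (the minimal-depth support vertex forces $\lambda=\frac{1}{\mathrm{dep}(v)+1}$); so the case your recursion covers is vacuous and the case it omits is the whole content. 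One must restart at the exceptional depth, where the recursion yields growth of order $\binom{\mathrm{dep}(v)+n}{n}$ along any descending chain --- this is precisely the paper's argument. (iii) The assertion $\sigma_e(C_0)=\overline{\mathbb D_1(1)}$ is false: by the Brown--Halmos--Shields identity $C_0C_0^*=C_0+C_0^*-\mathrm{diag}(\frac{1}{n+1})$, the image of $C_0-I$ in the Calkin algebra is unitary, so $\sigma_e(C_0)=\partial\mathbb D_1(1)$; moreover ``Fredholm of index zero off $\sigma_e$'' holds only in the unbounded component. Your proof of (c) survives because only that component is needed for $\sigma(C_\mathscr T)\subseteq\overline{\mathbb D_1(1)}$, but the paper's route via the Weyl spectrum (invariant under compact perturbation, and equal to the spectrum for the hyponormal $C_0^{(d)}$ with empty point spectrum, and for $C_\mathscr T$ by Lemma~\ref{point-sp}) avoids the issue entirely.

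Two points where you genuinely diverge are both to your advantage. In (e), writing $C_\mathscr T|_{\ell^2(P_i)}=D_iC_0^{(i)}$ with $D_i=\mathrm{diag}\big(\frac{n+1}{k_\mathscr T+n+1}\big)$ and noting that $I-D_i$ is compact is cleaner than the paper's entrywise compactness criterion for the difference matrix. More importantly, in (b) you correctly observe that for $\lambda=\frac1m$ the classical eigenvector $x_\lambda$ has finite support $\{0,\dots,m-1\}$, so two paths branching at depth $\Ge m$ yield the \emph{same} eigenvector; the paper's proof of Lemma~\ref{point-sp-*} asserts infinite support and silently skips these countably many $\lambda\in\mathbb D_1(1)$. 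Your patch --- an eigenvector vanishing above the first branching vertex, with opposite signs on two sub-paths so the ancestor equations cancel, and values $h_q=h_{p+1}\prod_{j=p+2}^{q}\big(1-\frac{1}{j\lambda}\big)$ down each sub-path --- does work (the factors never vanish for $j>p+1\Ge m$, and the tail is square-summable), and it should be written out in full, since it is needed to make part (b) complete in the paper's proof as well as in yours.
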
 

In general, the norm of the Ces\'aro operator $C_{\mathscr T}$ could be bigger than $2$ (see Example~\ref{k-child-ex}). Also, the assumption that $\mathscr T$ is narrow can not be dropped (see 
Example~\ref{Chi-n-order-1}). 
Parts (a) and (c) are consistent with the classical case (see Theorem~\ref{THM1}(a)). 
Part (b) shows that when $\mathscr T$ has a branching vertex, the eigenvalues of $C^*_{\mathscr T},$ except $1,$ are no longer simple. Also, as shown in part (d), except for the directed tree $\mathscr T$ without branching vertices,  subnormality of $C_{\mathscr T}$ fails rather suprisingly. Nevertheless, the part (e) ensures essential subnormality of $C_{\mathscr T}$. 
    

Our proof of Theorem~\ref{main-thm} is fairly long and hinges crucially on Theorem~\ref{THM1}. It comprises several intermediate results (see Lemmata \ref{bounded-narrow}–\ref{subnormality}) and will be given in Section~\ref{2}. In Section 3, we present two examples and discuss several natural problems that emerge along the way.

\section{Boundedness, spectral properties and subnormality} \label{2}


We begin with the boundedness of the Ces\'aro operator in case of narrow directed trees. 
\begin{Lemma} \label{bounded-narrow}
If $\mathscr{T} = (V, E)$ is a narrow rooted directed tree, then $C_\mathscr{T}$ is a bounded linear operator satisfying $\|C_\mathscr{T}\| \Le 2\sqrt{\mathrm{width}(\mathscr T)}.$
\end{Lemma}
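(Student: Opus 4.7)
The plan is to decompose $\ell^2(V)$ along $m$ paths emanating from the root and to bound $C_\mathscr T$ on each path by the classical Ces\`aro operator $C_0$ from Theorem~\ref{THM1}. I would first treat the leafless case. Writing $S_n:=\mathrm{Chi}^{\langle n\rangle}(\mathsf{root})$ and $m:=\mathrm{width}(\mathscr T)$, the leafless and narrow hypotheses force $|S_n|$ to be nondecreasing in $n$ (every vertex has at least one child) and bounded by $m$, so $|S_n|$ stabilizes at some value $M\le m$ past some depth $N$. For $n\ge N$ every vertex has a unique child, hence the subtree past depth $N$ is a disjoint union of $M$ infinite rays. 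Tracing each such ray back to the root yields $M$ infinite rooted paths $\pi_1,\dots,\pi_M$ whose union is all of $V$ (every vertex is an ancestor of some vertex in $S_N$), with each vertex lying in at most $M$ of them.

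On each $\pi_i$, $C_\mathscr T$ acts as the classical Ces\`aro operator $C_0$ on $\ell^2(\pi_i)\cong\ell^2(\mathbb N)$, since the ancestors of $v\in\pi_i$ inside $\mathscr T$ are exactly the vertices of $\pi_i$ of depth at most $\mathrm{dep}(v)$. Theorem~\ref{THM1} then gives $\sum_{v\in\pi_i}|(C_\mathscr T f)(v)|^2\le 4\sum_{v\in\pi_i}|f(v)|^2$, and summing over $i$,
\[
\|C_\mathscr T f\|^2\;\le\;\sum_{i=1}^{M}\|(C_\mathscr T f)|_{\pi_i}\|^2\;\le\;4\sum_{i=1}^{M}\|f|_{\pi_i}\|^2\;\le\;4M\|f\|^2\;\le\;4m\|f\|^2,
\]
where the first inequality uses that every vertex lies on some path and the third uses the multiplicity bound. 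This yields $\|C_\mathscr T\|\le 2\sqrt m$ in the leafless case.

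For a narrow tree with leaves, I would instead apply the Schur test to the matrix $(C_\mathscr T)_{v,w}=(\mathrm{dep}(v)+1)^{-1}\chi_{\{w\le v\}}$ (read off from \eqref{action-C}) with weights $p(v)=q(v)=(\mathrm{dep}(v)+1)^{-1/2}$. The first Schur estimate $\sum_w(C_\mathscr T)_{v,w}q(w)\le 2 p(v)$ reduces to the standard inequality $\sum_{k=1}^{N}k^{-1/2}\le 2\sqrt N$, while the second, $\sum_v(C_\mathscr T)_{v,w}p(v)\le 2m\,q(w)$, is obtained by partitioning $\{v\ge w\}=\bigsqcup_{j\ge 0}\mathrm{Chi}^{\langle j\rangle}(w)$, using the narrowness bound $|\mathrm{Chi}^{\langle j\rangle}(w)|\le m$ from Remark~\ref{rmk-narrow-branching}(1), and summing the resulting tail via the telescoping inequality $k^{-3/2}\le 2\bigl((k-1)^{-1/2}-k^{-1/2}\bigr)$ for $k\ge 2$. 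The main obstacle I anticipate is the boundary behaviour at $w=\mathsf{root}$ (depth zero): a crude application produces an additional $O(1)$ term because $\sum_{k\ge 1}k^{-3/2}=\zeta(3/2)$ exceeds $2$, so one must isolate the $j=0$ term (for which $|\mathrm{Chi}^{\langle 0\rangle}(w)|=1$ rather than the generic $m$) and apply the telescoping bound only to the tail $j\ge 1$, which restores the sharp constant $2m$ in the second Schur inequality and hence gives $\|C_\mathscr T\|^2\le 4m$.
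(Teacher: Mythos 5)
Your leafless argument is correct, and it is genuinely different from the paper's proof: you cover $V$ by $M\le \mathrm{width}(\mathscr T)$ root-paths (using that leaflessness makes $n\mapsto\mathrm{card}(\mathrm{Chi}^{\langle n\rangle}(\mathsf{root}))$ nondecreasing, hence eventually constant) and transport the classical bound $\|C_0\|=2$ along each path. The paper instead decomposes $f$ by generations, proves $\|(C_{\mathscr T}f)\cdot\chi_{\mathrm{Chi}^{\langle n\rangle}(\mathsf{root})}\|\le\frac{\sqrt{l_n}}{n+1}\sum_{j=0}^{n}\|f\cdot\chi_{\mathrm{Chi}^{\langle j\rangle}(\mathsf{root})}\|$, and applies Hardy's inequality to the scalar sequence $a_n=\|f\cdot\chi_{\mathrm{Chi}^{\langle n\rangle}(\mathsf{root})}\|$; that route needs no case distinction and no leaflessness. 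Note also that your path-covering cannot be adapted to trees with leaves: covering every leaf requires a separate root-path per leaf, and a narrow tree may have infinitely many leaves, so the multiplicity at the root becomes unbounded and the step $\sum_i\|f|_{\pi_i}\|^2\le M\|f\|^2$ collapses. So the entire burden of the leafy case rests on your Schur test.

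That is where the gap is. With $p(v)=q(v)=(\mathrm{dep}(v)+1)^{-1/2}$ the second Schur sum at a vertex $w$ of depth $d$ is $\sum_{j\ge 0}\mathrm{card}(\mathrm{Chi}^{\langle j\rangle}(w))\,(d+j+1)^{-3/2}$, and isolating $j=0$ as you propose gives at best
\[
(d+1)^{-3/2}+m\sum_{k\ge d+2}k^{-3/2}\;\le\;(d+1)^{-3/2}+2m\,(d+1)^{-1/2}\;=\;\bigl(2m+(d+1)^{-1}\bigr)\,q(w).
\]
The extra term $(d+1)^{-3/2}$ coming from $j=0$ is \emph{not} absorbed; at the root it contributes a full $+1$ to the Schur constant, so $\beta\ge 2m+1$ and the test yields only $\|C_{\mathscr T}\|\le\sqrt{2(2m+1)}=\sqrt{4m+2}>2\sqrt m$. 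The failure is not an artifact of crude estimation: for $m=1$ (a single path, where $C_{\mathscr T}\cong C_0$) the exact column sum at the root is $\sum_{k\ge1}k^{-3/2}=\zeta(3/2)\approx 2.612$, so $\beta\ge\zeta(3/2)$ and the method cannot give better than $\sqrt{2\,\zeta(3/2)}\approx 2.29$, already exceeding the asserted bound $2\sqrt{1}=2$. Thus your claim that isolating $j=0$ ``restores the sharp constant $2m$'' is false, and the leafy case establishes boundedness only with a constant strictly larger than $2\sqrt{\mathrm{width}(\mathscr T)}$. The cleanest repair is to drop the case split entirely and run the paper's argument: the generation decomposition reduces the claim to the discrete Hardy inequality $\sum_{n}\bigl(\frac{1}{n+1}\sum_{j=0}^{n}a_j\bigr)^2\le 4\sum_n a_n^2$ applied to $a_n=\|f\cdot\chi_{\mathrm{Chi}^{\langle n\rangle}(\mathsf{root})}\|$, with the factor $\mathrm{width}(\mathscr T)$ entering only through the Cauchy--Schwarz step $\bigl(\sum_{v\in\mathrm{Chi}^{\langle n\rangle}(\mathsf{root})}|f(\mathrm{par}^j(v))|^2\bigr)^{1/2}\le\sqrt{l_n}\,\|f\cdot\chi_{\mathrm{Chi}^{\langle n-j\rangle}(\mathsf{root})}\|$.
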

\begin{proof} Assume that $\mathscr{T}$ is a narrow rooted directed tree. We first note that
\beq \label{gen-decom}
\|f\|^2 = \sum_{n =0 }^\infty \|f \cdot \chi_{\childn{n}{\mathsf{root}}}\|^2, \quad f \in \ell^2(V).
\eeq
This is a consequence of \eqref{disjoint}.
Assume now that $\mathscr{T}$ is a narrow rooted directed tree.
For an integer $n,$ let $l_n$ denote the cardinality of $\childn{n}{\mathsf{root}}.$
Since $\mathscr{T}$ is locally finite, $l_n <\infty$ for all integers $n \Ge 0.$  
Let $f \in \ell^2(V),$ and note that  
\beqn
(C_{\mathcal{T}} f) \cdot \chi_{\childn{n}{\mathsf{root}}}&=& \sum_{v \in \childn{n}{\mathsf{root}}}\Big(\frac{1}{\mathrm{dep}(v)+1}\sum\limits_{j=0}^{\mathrm{dep}(v)} f(\text{par}^j(v))\Big)e_v \\
&=& \frac{1}{n+1}\sum_{v \in \childn{n}{\mathsf{root}}}\Big(\sum\limits_{j=0}^{n} f(\text{par}^j(v))\Big)e_v \\
&=& \frac{1}{n+1}\sum\limits_{j=0}^{n} \Big(\sum_{v \in \childn{n}{\mathsf{root}}}f(\text{par}^j(v))e_v\Big).
\eeqn
This combined with $|f(w)| \Le \|f \cdot \chi_{\childn{n}{\mathsf{root}}}\|,$ $w \in \childn{n}{\mathsf{root}},$ implies that
\beqn
\|(C_{\mathcal{T}} f)\cdot \chi_{\childn{n}{\mathsf{root}}}\| & \Le & \frac{1}{n+1}\sum\limits_{j=0}^{n} \Big \|\sum_{v \in \childn{n}{\mathsf{root}}}f(\text{par}^j(v))e_v\Big\| \\
& = & \frac{1}{n+1}\sum\limits_{j=0}^{n} \Big(\sum_{v \in \childn{n}{\mathsf{root}}}|f(\text{par}^j(v))|^2\Big)^{1/2} \\
& \Le & \frac{\sqrt{l_n}}{n+1}\sum\limits_{j=0}^{n} \|f \cdot \chi_{\childn{n-j}{\mathsf{root}}}\|.
\eeqn
Since $\mathcal{T}$ is a narrow tree, this yields
\beqn
\|C_{\mathcal{T}} f\|^2 &\overset{\eqref{gen-decom}}=&
\sum_{n=0}^{\infty} \|(C_{\mathcal{T}} f)\cdot \chi_{\childn{n}{\mathsf{root}}}\|^2 \\
& \Le & \Big(\sup_{k \Ge 0}\, l_k\Big) \sum_{n=0}^{\infty}\Big(\frac{1}{n+1}\sum\limits_{j=0}^{n} \|f \cdot \chi_{\childn{j}{\mathsf{root}}}\|\Big)^2 \\
&\overset{(\ast)}\Le & 4\,\mathrm{width}(\mathscr T) \sum_{n=0}^{\infty}\|f \cdot \chi_{\childn{n}{\mathsf{root}}}\|^2 \\
&\overset{\eqref{gen-decom}}=& 4\,\mathrm{width}(\mathscr T)\|f\|^2,
\eeqn
where the inequality in ($\ast$) follows from Hardy's inequality (see \cite{H1920}): {\it If $\{a_n\}_{n \in \mathbb{Z}_+}$ is a sequence of non-negative real numbers, then 
\beqn \sum\limits_{n=0}^{\infty} \Big(\frac{1}{n+1}\sum\limits_{j=0}^{n} a_j\Big)^2 \Le 4 \sum\limits_{n=0}^{\infty} a_n^2.
\eeqn
}
Since $f \in \ell^2(V)$ is arbitrary, this completes the proof. 
\end{proof}

We show below that $C_\mathscr T$ has no eigenvalues unless $\mathscr T$ has a leaf. 
\begin{Lemma} \label{point-sp}
Let $\mathscr{T} = (V, E)$ be a locally finite rooted directed tree. Then $\sigma_p(C_\mathscr{T})=\emptyset$ if and only if $\mathscr T$ is leafless.
\end{Lemma}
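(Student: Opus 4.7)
The ``only if'' direction is immediate from \eqref{action-C}: if $v$ is a leaf at depth $n$, then $\mathrm{Chi}^{\langle j\rangle}(v) = \emptyset$ for all $j \Ge 1$, so the sum collapses to $C_{\mathscr T} e_v = \tfrac{1}{n+1}\,e_v$, putting $1/(n+1) \in \sigma_p(C_{\mathscr T})$.

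For the converse, the plan is to suppose $\mathscr T$ leafless and $C_{\mathscr T} f = \lambda f$ for some nonzero $f \in \mathcal D(C_{\mathscr T})$, then derive a one-step recursion along ancestral chains which forces $f \equiv 0$. Evaluating the eigenequation at $\mathsf{root}$ gives $(1-\lambda)\,f(\mathsf{root}) = 0$, so either $\lambda = 1$ or $f(\mathsf{root}) = 0$. For any $u \in V$ of depth $n$ and any $v \in \mathsf{Chi}(u)$, subtracting the eigenequation at $u$ from that at $v$ (their left-hand sums share the ancestors of $u$) produces the key identity
\beqn
\bigl((n+2)\lambda - 1\bigr)\,f(v) \;=\; (n+1)\lambda\,f(u),
\eeqn
so that, as long as $(k+1)\lambda \neq 1$ for $k = 1, \ldots, \mathrm{dep}(v)$,
\beqn
f(v) \;=\; f(\mathsf{root})\prod_{k=1}^{\mathrm{dep}(v)} \frac{k\lambda}{(k+1)\lambda - 1}.
\eeqn

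I would then split into three cases. If $\lambda \notin \{1, 1/2, 1/3, \ldots\}$, no denominator vanishes, and $\lambda \neq 1$ forces $f(\mathsf{root}) = 0$, so $f \equiv 0$. If $\lambda = 1$, every factor above equals $1$, so $f$ is constant on the infinite set $V$, again forcing $f = 0$ in $\ell^2(V)$. The hard case is $\lambda = 1/(n_0+1)$ with $n_0 \Ge 1$: here $\lambda \neq 1$ gives $f(\mathsf{root}) = 0$, the recursion forces $f \equiv 0$ at depths $\Le n_0 - 1$, the equation at depth $n_0$ becomes automatic, and the values of $f$ at depth $n_0$ appear unconstrained.

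To dispose of this case, I would compute $\frac{k\lambda}{(k+1)\lambda - 1} = \frac{k}{k - n_0}$ for $k \Ge n_0+1$, so that telescoping yields $f(v) = \binom{\mathrm{dep}(v)}{n_0}\,f(w)$ whenever $v$ lies below some $w$ of depth $n_0$. Because $\mathscr T$ is leafless, each such $w$ has at least one descendant at every depth $n \Ge n_0$, and descendants of distinct $w$'s are disjoint, so
\beqn
\|f\|^2 \;\Ge\; \sum_{\mathrm{dep}(w) = n_0} |f(w)|^2 \sum_{n \Ge n_0} \binom{n}{n_0}^{\!2}.
\eeqn
The inner series diverges, so $f(w) = 0$ at every depth-$n_0$ vertex, and the recursion then propagates $f \equiv 0$ below depth $n_0$, contradicting $f \neq 0$. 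It is precisely this divergence step where leaflessness is essential: a leaf within the subtree below some depth-$n_0$ vertex would truncate the inner sum and permit a genuine $\ell^2$ eigenvector, in agreement with the ``only if'' direction.
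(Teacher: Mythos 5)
Your proposal is correct and takes essentially the same route as the paper's proof: the same one-step recursion along a descending chain, the same binomial-coefficient growth $\binom{n}{n_0}$ of $|f|$ below a vertex of depth $n_0$, and the same contradiction with square-summability, with leaflessness supplying the infinite descending chain. The only difference is organizational: the paper skips your case analysis on $\lambda$ by starting from a vertex $v$ of minimal depth in the support of $f$, which forces $\lambda = 1/(\mathrm{dep}(v)+1)$ immediately and lands directly in your ``hard case.''
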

\begin{proof} Assume that $\mathscr T$ is leafless. Suppose for some $f \in \ell^2(V)$ and $\lambda \in \mathbb C,$ $C_\mathscr{T}f = \lambda f.$ Let $v \in V$ be a vertex of smallest depth for which $f(v) \neq 0.$ It follows from \eqref{EQ3} that
\beqn
\lambda f(v) = \frac{1}{\mathrm{dep}(v)+1}\sum\limits_{j=0}^{\mathrm{dep}(v)} f(\text{par}^j v) =  \frac{1}{\mathrm{dep}(v)+1}\,f(v).
\eeqn
As a consequence, $\lambda = \frac{1}{\mathrm{dep}(v)+1}.$ Let $v_1 \in \mathrm{Chi}(v).$ 
Once again by \eqref{EQ3}, 
\beqn
\lambda f(v_1) = \frac{1}{\mathrm{dep}(v_1)+1}\sum\limits_{j=0}^{\mathrm{dep}(v_1)} f(\text{par}^j(v_1)) =  \frac{1}{\mathrm{dep}(v_1)+1}(f(v_1)+f(v)).
\eeqn
However, since $\lambda = \frac{1}{\mathrm{dep}(v)+1},$ we obtain 
\beqn
\frac{1}{\mathrm{dep}(v)+1}\, f(v_1) =  \frac{1}{\mathrm{dep}(v)+2}(f(v_1)+f(v)).
\eeqn
This shows that $f(v_1) \neq 0$ and $f(v_1)=f(v)(\mathrm{dep}(v)+1).$ 
In view of
\beqn
\frac{k+n+1}{k+1}{k+n \choose n}={k+n+1 \choose n}, \quad k \Ge 0, n \Ge 1,
\eeqn
a mathematical induction on $n$ yields a sequence $\{v_n\}_{n \Ge 1}$ such that $v_n \in \mathrm{Chi}^{\langle n \rangle}(v)$ and $f(v_n)=f(v){\mathrm{dep}(v)+n \choose n}.$ However, since $f \in \ell^2(V),$ we have $\sum_{n=1}^{\infty}|f(v_n)|^2 < \infty.$ This is not possible unless $f(v)=0,$ which is contrary to the choice of $v.$ Finally, if $\mathscr T$ has a leaf, say, $v$, then $C_{\mathscr T}e_v=\frac{1}{\mathrm{dep}(v)+1}e_v,$ that is, $\sigma_p(C_\mathscr{T})\neq \emptyset.$   
\end{proof}

Let us analyze the eigenspectrum of $C^*_\mathscr T.$ 
\begin{Lemma} \label{point-sp-*}
Let $\mathscr{T} = (V, E)$ be a locally finite rooted directed tree. Then 
$\mathbb D_1(1) \subseteq \sigma_p(C^*_\mathscr T).$
Moreover, if $\mathscr T$ has two paths, then every $\lambda \in \mathbb D_1(1) \backslash \{1\}$ is not a simple eigenvalue. In particular, $\|C_\mathscr{T}\| \Ge 2.$
\end{Lemma}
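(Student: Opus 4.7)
The plan is to transfer the classical eigenvectors $x_\lambda$ of $C_0^*$ (given by Theorem~\ref{THM1}(b)) onto paths in $\mathscr T$, and then, when $\mathscr T$ branches, to produce a second eigenvector by an antisymmetric ``difference'' construction along two diverging paths.

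For the first assertion, since $\mathscr T$ is countably infinite and locally finite, K\"onig's lemma supplies an infinite path $\mathscr P$ from the root with vertices $w_0=\mathsf{root}, w_1, w_2, \ldots$ and $\mathrm{dep}(w_n)=n$. Given $\lambda \in \mathbb D_1(1)$, I would define $f_\lambda(w_n):=x_\lambda(n)$ and $f_\lambda(v):=0$ elsewhere; this is in $\ell^2(V)$ since $x_\lambda\in\ell^2(\mathbb Z_+)$. A pointwise rewriting of \eqref{action-C*} (summing the contributions of each descendant) yields
\[
(C_\mathscr T^* f)(u)=\sum_{w \in \mathsf{Des}(u)}\frac{f(w)}{\mathrm{dep}(w)+1},\qquad \mathsf{Des}(u):=\bigsqcup_{j\Ge 0}\mathsf{Chi}^{\langle j\rangle}(u).
\]
For $u\notin V_\mathscr P$, $\mathsf{Des}(u)\cap V_\mathscr P=\emptyset$, so $(C_\mathscr T^* f_\lambda)(u)=0$; for $u=w_m\in V_\mathscr P$, the sum collapses to $\sum_{n\Ge m}x_\lambda(n)/(n+1)=\lambda x_\lambda(m)$ by the classical identity $C_0^* x_\lambda=\lambda x_\lambda$. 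Hence $C_\mathscr T^* f_\lambda=\lambda f_\lambda$, proving $\mathbb D_1(1)\subseteq\sigma_p(C_\mathscr T^*)$.

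For the moreover clause, pick two paths $\mathscr P_1,\mathscr P_2$ with vertices $w_n^{(1)},w_n^{(2)}$ and let $k\Ge 1$ be least with $w_k^{(1)}\neq w_k^{(2)}$, so $w_{k-1}^{(1)}=w_{k-1}^{(2)}$ is a branching vertex. For $\lambda\in\mathbb D_1(1)\setminus\{1\}$, set $h(k)=1$, $h(n)=\prod_{m=k+1}^{n}(1-\tfrac{1}{\lambda m})$ for $n>k$, and
\[
g_\lambda := \sum_{n\Ge k}h(n)\bigl(e_{w_n^{(1)}}-e_{w_n^{(2)}}\bigr).
\]
If $x_\lambda(k)\neq 0$, the identity $h(n)=x_\lambda(n)/x_\lambda(k)$ places $g_\lambda$ in $\ell^2(V)$; at the finitely many exceptional values $\lambda=1/j$ with $2\Le j\Le k$, a direct computation gives $h(n)=\binom{k}{j}/\binom{n}{j}=O(n^{-j})$, again $\ell^2$. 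To verify $C_\mathscr T^* g_\lambda=\lambda g_\lambda$ I would case-split on $u$: if $u$ is an ancestor of $w_{k-1}^{(1)}$ (including the vertex itself), both tails of $g_\lambda$ lie in $\mathsf{Des}(u)$ and the antisymmetric contributions cancel; if $u=w_m^{(i)}$ with $m\Ge k$, only the $i$-th tail meets $\mathsf{Des}(u)$ and the sum reduces to $\pm\sum_{n\Ge m}h(n)/(n+1)=\pm\lambda h(m)$, either by the same reduction to Theorem~\ref{THM1}(b) in the generic case or by a direct telescoping at the exceptional $\lambda$'s; at all other $u$ both sides vanish. Since $f_\lambda(\mathsf{root})=1\neq 0=g_\lambda(\mathsf{root})$, the two eigenvectors are linearly independent, so $\dim\ker(C_\mathscr T^*-\lambda I)\Ge 2$.

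The norm estimate is a soft consequence: $\mathbb D_1(1)\subseteq\sigma_p(C_\mathscr T^*)\subseteq\sigma(C_\mathscr T^*)$; by closedness of the spectrum and $\sigma(C_\mathscr T^*)=\{\overline\lambda:\lambda\in\sigma(C_\mathscr T)\}$ together with the conjugation-invariance of $\mathbb D_1(1)$, one obtains $\overline{\mathbb D_1(1)}\subseteq\sigma(C_\mathscr T)$, whence $\|C_\mathscr T\|\Ge r(C_\mathscr T)\Ge 2$. The step I expect to be the main obstacle is the tail identity $\sum_{n\Ge m}h(n)/(n+1)=\lambda h(m)$ at the exceptional values $\lambda=1/j$ with $2\Le j\Le k$: there $x_\lambda(k)=0$ invalidates the clean reduction to Theorem~\ref{THM1}(b), and one must argue directly, e.g.\ via the telescoping identity $\frac{1}{(n+1)n\cdots(n-j+1)}=\frac{1}{j}\bigl(\frac{1}{n(n-1)\cdots(n-j+1)}-\frac{1}{(n+1)n\cdots(n-j+2)}\bigr)$.
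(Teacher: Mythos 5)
Your proof is correct. For the inclusion $\mathbb{D}_1(1)\subseteq\sigma_p(C^*_{\mathscr T})$ you do essentially what the paper does: the paper restricts $C^*_{\mathscr T}$ to $\ell^2(V_{\mathscr P})$ for a path $\mathscr P$ and exhibits a unitary intertwining it with $C_0^*$, whereas you verify the eigenvalue equation pointwise via $(C^*_{\mathscr T}f)(u)=\sum_{w\in\mathsf{Des}(u)}f(w)/(\mathrm{dep}(w)+1)$; these are the same argument in different clothing (and both share the same mild informality about the domain of $C^*_{\mathscr T}$ when $C_{\mathscr T}$ is unbounded). For the multiplicity claim your route is genuinely different, and in fact stronger. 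The paper takes one transported eigenvector per path and argues they are independent because ``the eigenvector of $C_0^*$ does not have finite support'' and two paths share only finitely many vertices; but that support claim is false at $\lambda=1/j$, $j\Ge 2$ (all of which lie in $\mathbb{D}_1(1)$), where $x_\lambda(n)=0$ for $n\Ge j$ — and if the two paths agree up to depth $k-1$ with $j\Le k$, the two transported eigenvectors coincide and the paper's argument yields only one eigenvector. Your antisymmetric vector $g_\lambda$ supported on the two diverging tails, with the explicit formula $h(n)=\binom{k}{j}/\binom{n}{j}$ and the telescoping identity at the exceptional points, supplies a second eigenvector there (it vanishes at the root while $f_\lambda$ does not), so your construction closes a genuine gap in the published proof at the cost of a more involved computation. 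The norm estimate via the spectral radius is the same in both.
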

\begin{proof}
Let $\mathscr P=(V_{\mathscr P}, E_{\mathscr P})$ be any path in $\mathscr T,$ where $V_{\mathscr P}=\{v_n\}_{n \in \mathbb N}$ be the vertex set of $\mathscr P$ such that $v_0=\mathsf{root}$ and $\mbox{Chi}_\mathscr P(v_n)=\{v_{n+1}\},$ $n \in \mathbb N.$ It follows from \eqref{action-C*} that $C^*_{\mathscr T}(\ell^2(V_{\mathscr P})) \subseteq \ell^2(V_{\mathscr P}).$ 
Define the unitary transformation $U : \ell^2(V_{\mathscr P}) \rar \ell^2(\mathbb N)$ by setting 
\beq \label{unitary-eigen-*}
Ue_{v_n}=e_n, \quad n \in \mathbb N. 
\eeq
Note that by \eqref{action-C*}, for $n \in \mathbb N,$
\beqn
UC^*_{\mathscr T}e_{v_n}=\frac{1}{\mathrm{dep}(v_n)+1}\sum_{j=0}^{\mathrm{dep}(v_n)}
Ue_{\mathrm{par}^j(v_n)} = \frac{1}{n+1}\sum_{j=0}^{n}
e_{j}=C^*_0e_n=C^*_0Ue_{v_n}.
\eeqn
It follows that
\beq \label{uni-equi}
U{C^*_{\mathscr T}}|_{\ell^2(V_{\mathscr P})}=C^*_0U.
\eeq
Since every $\lambda \in \mathbb D_1(1)$ is an eigenvalue of $C^*_0$ (see Theorem~\ref{THM1}(b)), $\lambda$ is also an eigenvalue of ${C^*_{\mathscr T}}|_{\ell^2(V_{\mathscr P})}.$ The inclusion $\sigma_p({C^*_{\mathscr T}}|_{\ell^2(V_{\mathscr P})}) \subseteq \sigma_p(C^*_{\mathscr T})$ now yields the first part. 

Assume that $\mathscr T$ has two paths and let $\lambda \in \mathbb D_1(1)\backslash \{1\}.$ 
Note that the eigenvector of $C^*_0$ corresponding to the eigenvalue $\lambda$ does not have finite support (see Theorem~\ref{THM1}(b)). By \eqref{unitary-eigen-*} and \eqref{uni-equi}, the eigenvector of $C^*_{\mathscr T}$ corresponding to the eigenvalue $\lambda$ does not have finite support. Also, any two paths in $\mathscr T$ can have at most finitely many common vertices (since if $v$ belongs to the intersection of any two paths, then so does $\text{par}^j(v)$ for any $0 \Le j \Le \mathrm{dep}(v)$). Combining these facts, we may conclude that there are at least two linearly independent eigenvectors corresponding to $\lambda.$
This together with the fact that operator norm is bigger than or equal to the spectral radius completes now the proof.   
\end{proof}

Let $\mathscr{T} = (V, E)$ be a locally finite rooted directed tree with root $\mathsf{root}.$ Assume that $\mathscr T$ is of finite branching index. 
Recall from \cite[Proof of Lemma~5.3]{CT2016} that 
$V$ decomposes as  
\begin{eqnarray}\label{EQ2}
V = \Big(\bigsqcup_{j=0}^{k_{\mathscr{T}}-1} \mathrm{Chi}^{\langle j \rangle} (\mathsf{root})\Big) \bigsqcup \Big(\bigsqcup_{i=1}^d\{v_{i,n}: n \Ge 0\}\Big),
\end{eqnarray}
where $d := \mathrm{card}(\mathrm{Chi}^{\langle k_{\mathscr{T}} \rangle}(\mathsf{root})) \in \mathbb N$,  
$\mathrm{Chi}^{\langle k_{\mathscr{T}} \rangle}(\mathsf{root}) = \{v_{i,0}: i =1,2,\ldots,d\}$ and $\mathrm{Chi}(v_{i,n}) = \{v_{i,n+1}\}$ for all integers $n \Ge 0$, $i = 1,2, \ldots, d.$

We now use \eqref{EQ2} to see that the Ces\'aro operator $C_\mathscr{T}$ is a compact perturbation of a finite direct sum of the classical Ces\'aro operator $C_0$ provided the rooted directed tree $\mathscr T$ is locally finite and of finite branching index.
\begin{Lemma} \label{lem-decom}
Let $\mathscr{T} = (V, E)$ be a locally finite rooted directed tree with root $\mathsf{root}.$ 
Assume that $\mathscr T$ is of finite branching index $k_\mathscr T$ and let $d := \mathrm{card}(\mathrm{Chi}^{\langle k_{\mathscr{T}} \rangle}(\mathsf{root})) < \infty.$
Then $\ell^2(V)$ decomposes as  
\begin{eqnarray}\label{EQ6}
\ell^2(V) =\mathcal{M} \oplus \mathcal H_1\oplus  \cdots \oplus \mathcal H_d,
\end{eqnarray}
where $\mathcal{M} = \bigvee \{e_v:v \in \sqcup_{j=0}^{k_{\mathscr{T}}-1} \mathrm{Chi}^{\langle j \rangle} (\mathsf{root})\}$ is a finite dimensional subspace of $\ell^2(V)$ and $\mathcal H_i = \bigvee \{e_{v_{i,n}} : n \Ge 0\},$ $i = 1, \ldots, d$ $($see \eqref{EQ2}$).$ Moreover, 
with respect to the decomposition \eqref{EQ6}, $C_\mathscr{T}$ decomposes as
\begin{eqnarray}\label{EQ7}
C_\mathscr{T} = 
\begin{bmatrix}
T & 0 & \cdots & 0\\
A_1 & B_1 & \cdots & 0\\
\vdots & \vdots &  \ddots &\vdots\\
A_d & 0 & \cdots &B_d
\end{bmatrix}, 
\end{eqnarray}
where $B_i : \mathcal H_i \rar \mathcal H_i$ is essentially equivalent to $C_{0},$ $T : \mathcal{M} \to \mathcal{M}$ and $A_i : \mathcal{M} \to  \mathcal H_i $ are bounded linear finite rank operators. 
\end{Lemma}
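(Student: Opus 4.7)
The plan is to use the decomposition \eqref{EQ2} to set up \eqref{EQ6}, verify the block-triangular form of $C_\mathscr T$ by directly computing $(C_\mathscr T f)(v)$ on the two kinds of vertices, and then recognize each diagonal block $B_i$ as a multiplicative perturbation of the classical Ces\`aro operator $C_0$. First, since all branching vertices of $\mathscr T$ have depth strictly less than $k_{\mathscr T}$ and $\mathscr T$ is locally finite, one has $\mathrm{card}(\mathrm{Chi}^{\langle j\rangle}(\mathsf{root}))\Le d$ for every $j\Ge k_{\mathscr T}$, while it is finite for the finitely many smaller $j$; hence $\mathscr T$ is narrow and Lemma~\ref{bounded-narrow} gives $C_\mathscr T\in\mathcal B(\ell^2(V))$. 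The orthogonal decomposition \eqref{EQ6} with $\dim\mathcal M<\infty$ is then immediate from \eqref{EQ2}.

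Writing $k:=k_{\mathscr T}$, the action of $C_\mathscr T$ is governed by ancestry. For $v\in\mathcal M$, every $\mathrm{par}^j(v)$ already lies in $\mathcal M$, so $C_\mathscr T\mathcal M\subseteq\mathcal M$. For $v=v_{i,n}$ we have $\mathrm{dep}(v_{i,n})=k+n$, and
\beqn
\mathrm{par}^j(v_{i,n})=\begin{cases} v_{i,n-j}\in\mathcal H_i, & 0\Le j\Le n,\\ w_{i,\,k+n-j}\in\mathcal M, & n+1\Le j\Le k+n,\end{cases}
\eeqn
where $w_{i,\ell}:=\mathrm{par}^{k-\ell}(v_{i,0})$ denote the ancestors of $v_{i,0}$ in $\mathcal M$ (at depth $\ell$), $0\Le\ell\Le k-1$. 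Consequently the $\mathcal H_i\to\mathcal H_{i'}$ block vanishes for $i\neq i'$, the $\mathcal H_i\to\mathcal M$ block is zero, and \eqref{EQ7} holds with $T:=C_\mathscr T|_{\mathcal M}$. Splitting the sum in \eqref{EQ3} at $j=n$ and reindexing the tail by $\ell=k+n-j$ gives
\beqn
(B_if)(v_{i,n})&=&\frac{1}{k+n+1}\sum_{j=0}^{n}f(v_{i,n-j}),\\
(A_ig)(v_{i,n})&=&\frac{1}{k+n+1}\sum_{\ell=0}^{k-1}g(w_{i,\ell}),\quad g\in\mathcal M.
\eeqn
Since $\dim\mathcal M<\infty$, $T$ is automatically of finite rank, and the formula for $A_i$ shows $A_ig=\bigl(\sum_{\ell=0}^{k-1}g(w_{i,\ell})\bigr)y_i$ with $y_i:=\sum_{n\Ge 0}\frac{1}{k+n+1}e_{v_{i,n}}\in\ell^2(V)$, so $A_i$ is a bounded rank-one operator on $\mathcal M\to\mathcal H_i$.

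It remains to identify $B_i$ modulo compact operators. Let $U_i:\mathcal H_i\to\ell^2(\mathbb N)$ be the unitary sending $e_{v_{i,n}}\mapsto e_{n+1}$. From the formula for $B_if$ one reads off
\beqn
(\tilde C_k x)_{n+1}=\frac{1}{n+k+1}\sum_{m=1}^{n+1}x_m,\qquad \tilde C_k:=U_iB_iU_i^*,
\eeqn
which, compared with $(C_0x)_{n+1}=\frac{1}{n+1}\sum_{m=1}^{n+1}x_m$, yields $\tilde C_k=M_k C_0$, where $M_k$ is the diagonal operator on $\ell^2(\mathbb N)$ with entries $\tfrac{n+1}{n+k+1}\to 1$. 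Hence $M_k-I$ is a diagonal compact operator, $\tilde C_k-C_0=(M_k-I)C_0$ is compact, and $B_i$ is essentially equivalent to $C_0$. The only mildly non-routine step is this last identification of $\tilde C_k$ as a multiplicative perturbation of $C_0$; everything else is bookkeeping with \eqref{EQ2}, and I expect no serious obstacle.
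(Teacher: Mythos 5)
Your proof is correct, and its skeleton coincides with the paper's: decompose $\ell^2(V)$ via \eqref{EQ2}, observe the block-lower-triangular action by tracking ancestry, conjugate each $B_i$ by the obvious unitary onto $\ell^2(\mathbb N)$, and show the result differs from $C_0$ by a compact operator. The one genuine divergence is the compactness step. The paper computes the matrix of $C_0-U_iB_iU_i^*$ entrywise and invokes a criterion for compactness of an infinite matrix (bounded row sums together with column sums tending to zero, \cite[Examples~V.17.4]{L1996}); you instead factor $U_iB_iU_i^*=M_kC_0$, where $M_k$ is the diagonal operator with entries $\tfrac{n+1}{n+k+1}$, so that the difference is $(M_k-I)C_0$ with $M_k-I$ a diagonal operator whose diagonal tends to $0$, hence compact. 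Your route is more elementary --- it needs only that a diagonal operator with null diagonal is compact and that the compacts form an ideal --- and it yields boundedness of $B_i$ for free from $\|M_k\|\Le 1$ and the boundedness of $C_0$, whereas the paper has to extract boundedness from the explicit difference matrix. You are also slightly more precise about the off-diagonal blocks, exhibiting each $A_i$ as an explicit rank-one operator $g\mapsto\big(\sum_{\ell}g(w_{i,\ell})\big)y_i$ with $y_i\in\ell^2$, while the paper simply notes that $T$ and $A_i$ have finite rank because $\mathcal M$ is finite dimensional. Your preliminary observation that $\mathscr T$ is narrow (so that Lemma~\ref{bounded-narrow} applies) is correct but not needed, since boundedness of $C_\mathscr{T}$ already follows from the finitely many bounded blocks.
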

\begin{proof} 
The decomposition \eqref{EQ6} is clear from \eqref{EQ2}. Since $\mathrm{Chi}(v_{i,n}) = \{v_{i,n+1}\}$ for all $n \in \mathbb N$, by \eqref{action-C}, 
\beqn
C_{\mathscr T} (\mathrm{span}\,\{e_{v_{i,n}} : n \Ge 0\}) \subseteq \mathrm{span}\,\{e_{v_{i,n}} : n \Ge 0\}, \quad i = 1,2, \ldots, d.
\eeqn 
We claim that the restriction $B_i$ of $C_{\mathscr T}$ to $\mathrm{span}\,\{e_{v_{i,n}} : n \Ge 0\}$ extends as a bounded linear operator on 
$\mathcal H_i$ and this extension is essentially equivalent to $C_{0}.$ To see that, fix $i=1, \ldots, d,$ and consider the unitary transformation $U_i : \mathcal H_i \rar \ell^2(\mathbb N)$ mapping $e_{v_{i,n}}$ to $n$th basis vector $e_n$ in the standard basis of $\ell^2(\mathbb N),$ $n \in \mathbb N.$ Note that by \eqref{action-C}, for $n \in \mathbb N,$
\beqn
U_iB_ie_{v_{i,n}}&=&
\sum_{j=0}^{\infty}\frac{1}{\mathrm{dep}(v_{i,n})+j+1}\sum_{v \in \mathrm{Chi}^{\langle j \rangle}(v_{i,n})}U_ie_v \\
&=& \sum_{j=0}^{\infty}\frac{1}{k_{\mathscr T}+n+j+1}e_{n+j}.
\eeqn 
Moreover, by \eqref{C0}, $C_0e_n = \sum_{j=0}^{\infty}\frac{1}{n+j+1}e_{n+j},$ $n \in \mathbb N.$
It follows that 
\beqn
(C_0-U_iB_iU^*_i)e_n=\sum_{j=0}^{\infty}\frac{k_{\mathscr T}}{(k_{\mathscr T}+n+j+1)(n+j+1)}e_{n+j}.
\eeqn
Thus $C_0-U_iB_iU^*_i$ has the matrix representation ${\mathfrak A}:=({\mathfrak a}_{m, n})_{m, n \Ge 0}$ with 
\beqn
{\mathfrak a}_{m, n} = \begin{cases}\frac{k_{\mathscr T}}{(k_{\mathscr T}+m+1)(m+1)}, & \mbox{if}~m \Ge n, \\
0 & \mbox{otherwise}.
\end{cases}
\eeqn
It is easy to see that $\delta(m)=\sum_{n=0}^{\infty}{\mathfrak a}_{m, n},$ $m \in \mathbb N,$ is a bounded sequence and $\gamma(n)=\sum_{m=0}^{\infty}{\mathfrak a}_{m, n},$ $n \in \mathbb N,$ is a null sequence. It now follows from \cite[Examples~V.17.4]{L1996} that  
${\mathfrak A}$ is a compact operator. This also shows that $B_i$ extends to the bounded linear operator $U^*_i(C_0-{\mathfrak A})U_i$. 
Moreover, since $\mathcal M$ is finite dimensional, $T$ and $A_i,$ $i=1, \ldots, d,$ are finite rank (bounded) linear operators. This gives the decomposition \eqref{EQ7} of $C_\mathscr{T}.$
This completes the proof. 
\end{proof}

As an application of Lemma~\ref{point-sp-*}, we see that the closed unit disc centered at $1$ is contained in the spectrum of the Ces\'aro operator. 
It turns out that actually equality holds. 
\begin{Lemma} \label{spectrum}
Let $\mathscr{T} = (V, E)$ be a leafless, locally finite rooted directed tree of finite branching index. Then $\sigma(C_\mathscr{T})=\overline{\mathbb D_1(1)}.$ 
\end{Lemma}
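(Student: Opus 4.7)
The plan is to combine Lemma~\ref{point-sp-*} (which handles the reverse inclusion) with the block lower-triangular decomposition of $C_\mathscr T$ from Lemma~\ref{lem-decom} (which yields the forward inclusion). The easy half $\overline{\mathbb D_1(1)} \subseteq \sigma(C_\mathscr T)$ is immediate: Lemma~\ref{point-sp-*} gives $\mathbb D_1(1) \subseteq \sigma_p(C^*_\mathscr T) \subseteq \sigma(C^*_\mathscr T)$, and since $\sigma(C^*_\mathscr T) = \{\overline{\lambda} : \lambda \in \sigma(C_\mathscr T)\}$ while $\mathbb D_1(1)$ is conjugation-symmetric, I get $\mathbb D_1(1) \subseteq \sigma(C_\mathscr T)$; closedness of the spectrum yields $\overline{\mathbb D_1(1)} \subseteq \sigma(C_\mathscr T)$.

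For the forward inclusion $\sigma(C_\mathscr T) \subseteq \overline{\mathbb D_1(1)}$, I work with the matrix \eqref{EQ7}. Setting $B := B_1 \oplus \cdots \oplus B_d$ and letting $A$ collect the lower-left blocks $A_1,\ldots,A_d$, the representation \eqref{EQ7} is block lower-triangular with diagonal blocks $T$ and $B$; a direct block computation shows that whenever both $\lambda - T$ and $\lambda - B$ are invertible, so is $\lambda - C_\mathscr T$, giving $\sigma(C_\mathscr T) \subseteq \sigma(T) \cup \sigma(B)$. I then refine the formula $B_i e_{v_{i,n}} = \sum_{j \Ge 0} \frac{1}{k_\mathscr T + n + j + 1} e_{v_{i,n+j}}$ (derived from \eqref{action-C} in the proof of Lemma~\ref{lem-decom}): under the unitary relabeling $e_{v_{i,n}} \mapsto e_{n + k_\mathscr T}$, the operator $B_i$ is exactly the restriction of $C_0$ to the $C_0$-invariant subspace $\mathcal N := \bigvee\{e_m : m \Ge k_\mathscr T\}$. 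Since $\mathbb C \setminus \sigma(C_0) = \mathbb C \setminus \overline{\mathbb D_1(1)}$ is connected and, for $|\lambda| > \|C_0\|$, the Neumann series for $(\lambda - C_0)^{-1}$ clearly preserves $\mathcal N$, analytic continuation forces $(\lambda - C_0)^{-1}\mathcal N \subseteq \mathcal N$ for every $\lambda \notin \overline{\mathbb D_1(1)}$. Consequently $\sigma(B_i) = \sigma(C_0|_\mathcal N) \subseteq \sigma(C_0) = \overline{\mathbb D_1(1)}$ by Theorem~\ref{THM1}(a), and so $\sigma(B) \subseteq \overline{\mathbb D_1(1)}$.

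To complete the proof, I verify $\sigma(T) \subseteq \sigma(B)$. Since $T$ acts on a finite-dimensional space, every $\lambda \in \sigma(T)$ is an eigenvalue with some eigenvector $v \in \mathcal M$; if moreover $\lambda \notin \sigma(B)$, then $(v,\,(\lambda - B)^{-1}Av)$ would be a nonzero element of $\ker(\lambda - C_\mathscr T)$, contradicting $\sigma_p(C_\mathscr T) = \emptyset$ supplied by Lemma~\ref{point-sp} (applicable because $\mathscr T$ is leafless). Thus $\sigma(C_\mathscr T) \subseteq \sigma(T) \cup \sigma(B) = \sigma(B) \subseteq \overline{\mathbb D_1(1)}$, which closes the argument.

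The main obstacle is the spectral control of each $B_i$ in the second paragraph: Lemma~\ref{lem-decom} only provides essential equivalence of $B_i$ with $C_0$, which controls $\sigma_e(B_i)$ but a priori leaves the full spectrum free to contain spurious eigenvalues anywhere in $\mathbb C$. Upgrading this to unitary equivalence with a genuine restriction of $C_0$ to an invariant subspace, together with connectedness of the resolvent set of $C_0$, is what delivers the full spectral containment; everything else is routine bookkeeping with block operator matrices and the earlier lemmas.
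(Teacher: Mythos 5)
Your proof is correct, but it takes a genuinely different route from the paper's. You prove the inclusion $\sigma(C_\mathscr{T})\subseteq \overline{\mathbb D_1(1)}$ by (i) the block lower-triangular inclusion $\sigma(C_\mathscr{T})\subseteq\sigma(T)\cup\sigma(B)$, (ii) the observation that each $B_i$ is not merely essentially equivalent to $C_0$ but actually unitarily equivalent to the restriction of $C_0$ to the invariant subspace $\bigvee\{e_m: m\Ge k_\mathscr{T}\}$ (a sharper structural fact than what Lemma~\ref{lem-decom} records, and your index bookkeeping checks out against the formula $U_iB_ie_{v_{i,n}}=\sum_{j\Ge 0}\tfrac{1}{k_\mathscr{T}+n+j+1}e_{n+j}$), so that the standard ``restrictions can only fill in holes of the spectrum'' principle, vacuous here because $\mathbb C\setminus\overline{\mathbb D_1(1)}$ is connected, gives $\sigma(B_i)\subseteq\sigma(C_0)$, and (iii) Lemma~\ref{point-sp} to rule out eigenvalues of the finite-dimensional corner $T$ outside $\sigma(B)$. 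The paper instead passes entirely through the Weyl spectrum: by Lemma~\ref{lem-decom}, $C_\mathscr{T}$ is essentially equivalent to $C_0^{(d)}$, so $\sigma_w(C_\mathscr{T})=\sigma_w(C_0^{(d)})$ by invariance of $\sigma_w$ under compact perturbations; Berberian's theorem applied to the hyponormal operator $C_0^{(d)}$ with empty point spectrum gives $\sigma_w(C_0^{(d)})=\sigma(C_0^{(d)})=\overline{\mathbb D_1(1)}$, and a second application of Berberian's result together with Lemma~\ref{point-sp} upgrades $\sigma_w(C_\mathscr{T})$ to $\sigma(C_\mathscr{T})$. The paper's argument is shorter given the cited machinery from \cite{B1970} and reuses the essential-equivalence statement needed elsewhere (parts (d)--(e)); yours is more elementary and self-contained, avoiding the Weyl spectrum and the hyponormality of $C_0$ altogether, at the cost of the extra (but correct) analytic-continuation argument for the resolvent and the eigenvector construction $(v,(\lambda-B)^{-1}Av)$. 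Both proofs ultimately rest on the same three inputs: Lemmata~\ref{point-sp}, \ref{point-sp-*} and Theorem~\ref{THM1}(a).
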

\begin{proof} By assumption, $d := \mathrm{card}(\mathrm{Chi}^{\langle k_{\mathscr{T}} \rangle}(\mathsf{root})) < \infty.$ Hence, 
by Lemma~\ref{lem-decom}, 
$C_\mathscr T$ is essentially equivalent to $d$-fold direct sum $C^{(d)}_0$ of $C_0.$ Since Weyl spectrum is invariant under compact perturbation, we have, 
\beq \label{Weyl-C-T}
\sigma_w(C_\mathscr T)=  \sigma_w(C^{(d)}_0).
\eeq
Since $C_0$ is hyponormal and $\sigma_p(C_0)=\emptyset$ (see parts (a) and (c) of Theorem~\ref{THM1}), by \cite[Corollary~5.6]{B1970} (or \cite[Proposition~II.4.11]{C1991}), 
$\sigma(C^{(d)}_0)=\sigma_w(C^{(d)}_0).$
Hence, by Theorem~\ref{THM1}(a), 
$\sigma(C^{(d)}_0)=\sigma(C_0)=\overline{\mathbb D_1(1)}.$
This combined with \eqref{Weyl-C-T} shows that 
$\sigma_w(C_\mathscr T)=\overline{\mathbb D_1(1)}.$
However, since $\sigma_p(C_\mathscr T)=\emptyset$ (see Lemma~\ref{point-sp}), by \cite[Proposition 2.10]{B1970},
$\sigma(C_\mathscr T)=\sigma_w(C_\mathscr T)=\overline{\mathbb D_1(1)}.$ This yields the first part. 
\end{proof} 
The proof above shows that 
$\sigma_w(C_\mathscr T)=\overline{\mathbb D_1(1)}.$ The same conclusion holds for the essential spectrum. 
It turns out except the classical case, there are no subnormal Ces\'aro operators. 
\begin{Lemma} \label{subnormality}
Let $\mathscr{T} = (V, E)$ be a locally finite rooted directed tree of finite branching index. 
If the Ces\'aro operator $C_\mathscr{T}$ is hyponormal, then 
$\mathscr{T}$ is isomorphic to the directed tree $\mathscr T_1$. 
\end{Lemma}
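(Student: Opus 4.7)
My plan is to prove the contrapositive: assuming $\mathscr T \not\cong \mathscr T_1$, I exhibit a vector that certifies the failure of hyponormality of $C_{\mathscr T}$. The argument splits into two cases depending on whether $\mathscr T$ has a leaf.

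\emph{Case 1: $\mathscr T$ has a leaf $v$.} Since the tree is countably infinite and locally finite, necessarily $v \neq \mathsf{root}$, so $\mathrm{dep}(v) \Ge 1$. Formula \eqref{action-C} collapses to $C_{\mathscr T}e_v = (\mathrm{dep}(v)+1)^{-1}e_v$, so $e_v$ is an eigenvector of $C_{\mathscr T}$ with positive eigenvalue. A standard consequence of hyponormality (applied to $T-\lambda$) is that $Tx=\lambda x$ forces $T^{*}x=\bar\lambda x$. Hyponormality of $C_{\mathscr T}$ would therefore require $C^{*}_{\mathscr T}e_v$ to be a scalar multiple of $e_v$; but \eqref{action-C*} gives a vector supported on the entire ancestor chain $\{v,\mathrm{par}(v),\ldots,\mathsf{root}\}$, of cardinality $\mathrm{dep}(v)+1\Ge 2$, a contradiction.

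\emph{Case 2: $\mathscr T$ is leafless and $\mathscr T \not\cong \mathscr T_1$.} Then $\mathscr T$ contains a branching vertex. Since $k_{\mathscr T}$ is finite, I can pick $w\in V_{\prec}$ of maximal depth $k:=\mathrm{dep}(w)=k_{\mathscr T}-1$, and write $\mathrm{Chi}(w)=\{w_1,\ldots,w_s\}$ with $s\Ge 2$. By maximality of $k$ together with leaflessness, no descendant of any $w_i$ is a branching vertex and each such vertex has exactly one child, so the subtree rooted at $w_i$ is a single path and $\mathrm{card}(\mathrm{Chi}^{\langle j\rangle}(w_i))=1$ for every $j\Ge 0$. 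I test the hyponormality inequality on
\[
 f := \sum_{i=1}^{s} e_{w_i}.
\]
Using \eqref{action-C} (the supports of $C_{\mathscr T}e_{w_i}$ are pairwise disjoint across $i$) together with \eqref{action-C*}, one obtains
\[
 \|C_{\mathscr T}f\|^2 = s\sum_{m=k+2}^{\infty}\frac{1}{m^2}, \qquad \|C^{*}_{\mathscr T}f\|^2 = \frac{s\bigl(1 + s(k+1)\bigr)}{(k+2)^2}.
\]
Showing $\|C_{\mathscr T}f\|^2 < \|C^{*}_{\mathscr T}f\|^2$ yields $\langle[C^{*}_{\mathscr T},C_{\mathscr T}]f,f\rangle<0$, contradicting hyponormality.

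\emph{Main obstacle.} The only non-bookkeeping step is the scalar inequality
\[
 \sum_{m=N}^{\infty}\frac{1}{m^2} < \frac{1 + s(N-1)}{N^2}, \qquad N:=k+2\Ge 2,\ s\Ge 2.
\]
For $N\Ge 3$, the crude integral bound $\sum_{m\Ge N}m^{-2}\Le N^{-2}+N^{-1}$ already handles the tightest case $s=2$. The sole borderline instance is $(N,s)=(2,2)$, i.e.\ the root itself branching into exactly two children, which is settled by the sharper identity $\sum_{m\Ge 2}m^{-2}=\pi^2/6-1<3/4$. Everywhere else the inequality is comfortable, and the rest of the argument is straightforward manipulation of \eqref{action-C} and \eqref{action-C*}.
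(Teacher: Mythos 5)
Your proposal is correct and, in its main case, follows essentially the same route as the paper: pick a branching vertex $w$ of maximal depth, test the hyponormality inequality on the sum of the basis vectors of its children, compute $\|C_{\mathscr T}f\|^2$ and $\|C^*_{\mathscr T}f\|^2$ exactly as in \eqref{commutator-exp}, and settle the resulting scalar inequality by the integral test for $k_{\mathscr T}>1$ together with the exact value $\pi^2/6-1$ in the borderline case (your use of all $s$ children rather than just two is an inessential variation). Your separate Case 1 for trees with a leaf, via the fact that a hyponormal operator maps eigenvectors of $T$ to eigenvectors of $T^*$, is a small but genuine addition: the paper's proof tacitly assumes the branches below the chosen branching vertex are infinite paths, whereas your argument covers the leaf case explicitly.
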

\begin{proof}
Suppose that $\mathscr{T}$ is not isomorphic to $\mathbb{N}.$ This together with \eqref{branch-index} implies that there exists a vertex $u \in V_{\prec}$ such that $\text{dep}(u)+1=k_{\mathscr T}$.  
For $i=1, 2,$ choose distinct vertices $v_i \in \text{Chi}(u)$ such that $\text{dep}(v_i) = k_{\mathscr{T}}.$ It follows that $\text{Chi}^{\langle j \rangle}(v_i)=\{v_{i,j}\}$ for some vertices $v_{i,j} \in V,$ $i=1, 2,$ $j \Ge 0.$ 
By \eqref{action-C} and \eqref{action-C*}, 
\beqn
C_\mathscr{T}e_{v_i} = \sum_{j=0}^{\infty}\frac{1}{k_{\mathscr{T}}+j+1}e_{v_{i, j}}, ~ 
C^*_\mathscr{T}e_{v_i} = \frac{1}{k_{\mathscr{T}}+1}\sum_{j=0}^{k_{\mathscr{T}}}
e_{\mathrm{par}^j(e_{v_i})}, \quad i=1, 2.
\eeqn
It follows that for $f=e_{v_1}+e_{v_2},$ 
\beqn
\|C_\mathscr{T}f\|^2=2\sum_{j=0}^{\infty}\frac{1}{(k_{\mathscr{T}}+j+1)^2}, \quad \|C^*_\mathscr{T}f\|^2=
\frac{2}{(k_{\mathscr{T}}+1)^2}+ 
\frac{4k_{\mathscr{T}}}{(k_{\mathscr{T}}+1)^2}.
\eeqn
Consequently, \beq 
\label{commutator-exp}
\|C_\mathscr{T}f\|^2 - \|C^*_\mathscr{T}f\|^2=2\sum_{j=1}^{\infty}\frac{1}{(k_{\mathscr{T}}+j+1)^2} - 
\frac{4k_{\mathscr{T}}}{(k_{\mathscr{T}}+1)^2}.
\eeq
Now, we consider two cases. If $k_{\mathscr{T}} = 1,$ then we have
\begin{align*}
\|C f \|^2 - \|C^{\ast} f \|^2  & =2\sum_{j=1}^{\infty}\frac{1}{(j+2)^2} - 
1 = \frac{\pi^2}{3} - \frac{7}{2} \approx -0.2101,
\end{align*}
which clearly shows that $C_{\mathscr T}$ is not hyponormal in this case. We may now assume that $k_{\mathscr{T}} > 1.$ By the integral test (see \cite[Theorem~9.2.6]{B2000}),
\beqn \sum_{j=k_{\mathscr{T}}+2}^{\infty}\frac{1}{j^2} \Le \int_{k_{\mathscr{T}}+1}^{\infty}\frac{1}{x^2}dx = \frac{1}{k_{\mathscr{T}}+1},
\eeqn
and hence by \eqref{commutator-exp},
\beqn
\|C_\mathscr{T}f\|^2 - \|C^*_\mathscr{T}f\|^2 \Le \frac{2}{k_{\mathscr{T}}+1}\Big(1 - 
\frac{2k_{\mathscr{T}}}{k_{\mathscr{T}}+1}\Big)=2\Bigg(\frac{1 - k_{\mathscr{T}}}{(k_{\mathscr{T}} +1)^2}\Bigg) < 0.
\eeqn
This shows that $C_{\mathscr T}$ is not hyponormal in this case also.  
\end{proof}

\begin{proof}[Proof of Theorem~\ref{main-thm}]
The boundedness of $C_{\mathscr T}$ and the etsimates \eqref{norm-estimate} follow from Lemmata~\ref{bounded-narrow}$\&$\ref{point-sp-*}. The part (a) follows from Lemma~\ref{point-sp}. The part (b) is precisely stated in Lemma~\ref{point-sp-*}. 
To see the remaining parts, note that if $\mathscr T$ is a leafless narrow rooted directed tree, then by Remark~\ref{rmk-narrow-branching}(2), $\mathscr T$ is of finite branching index. Parts (c)-(e) are now consequences of Lemmata~\ref{spectrum}$\&$\ref{subnormality} and Theorem~\ref{THM1}(c).  
\end{proof}

We conclude this section with a generalization of \cite[Main Theorem]{G2023}.
\begin{Corollary} 
Let $\mathscr{T} = (V, E)$ be a locally finite rooted directed tree of finite branching index. Then $C_\mathscr{T}$ is essentially normal.
\end{Corollary}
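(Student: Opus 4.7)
The plan is to reduce essential normality of $C_\mathscr T$ to that of the classical Ces\`aro operator $C_0$, using Lemma~\ref{lem-decom} to strip off the finite-rank contributions coming from the branching part of $\mathscr T$. First I would invoke Lemma~\ref{lem-decom} to write $C_\mathscr T$ as the block lower-triangular operator displayed in \eqref{EQ7}, relative to the decomposition $\ell^2(V) = \mathcal M \oplus \mathcal H_1 \oplus \cdots \oplus \mathcal H_d$ from \eqref{EQ6}. Since $\mathcal M$ is finite-dimensional, the diagonal block $T$ and each off-diagonal block $A_i : \mathcal M \to \mathcal H_i$ is of finite rank, hence compact. Consequently, $C_\mathscr T$ coincides with $0_\mathcal M \oplus B_1 \oplus \cdots \oplus B_d$ modulo the ideal of compact operators on $\ell^2(V)$.

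Next, each $B_i$ is essentially equivalent to $C_0$ by Lemma~\ref{lem-decom}, meaning that $B_i$ agrees with $C_0$ up to a unitary conjugation and a compact summand. Since the class of essentially normal operators is stable under unitary equivalence and compact perturbations, essential normality of each $B_i$ is equivalent to essential normality of $C_0$, which is precisely the content of \cite[Main Theorem]{G2023}.

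Finally, essential normality is preserved under finite orthogonal direct sums, because $[(A \oplus B)^*, A \oplus B] = [A^*, A] \oplus [B^*, B]$ and a finite direct sum of compact operators is compact; adjoining the finite-dimensional block $0_\mathcal M$ has no effect. This yields essential normality of $0_\mathcal M \oplus B_1 \oplus \cdots \oplus B_d$ and, by the first paragraph, of $C_\mathscr T$. I do not anticipate any genuine obstacle here: the structural work has already been absorbed into Lemma~\ref{lem-decom}, while \cite{G2023} supplies the classical input for $C_0$, so the remainder amounts to routine manipulations inside the Calkin algebra of $\ell^2(V)$.
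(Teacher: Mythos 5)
Your proposal is correct, and the structural reduction is the same as the paper's: both pass through Lemma~\ref{lem-decom}, discard the finite-rank blocks $T$ and $A_i$, and reduce matters to the essential normality of a finite direct sum of operators essentially equivalent to $C_0$. The only divergence is the classical input. You quote \cite[Main Theorem]{G2023} for the essential normality of $C_0$; the paper instead derives it from the Berger--Shaw theorem, using the fact that $C_0$ is a \emph{cyclic subnormal} operator (Theorem~\ref{THM1}(c)) to conclude that $[C_0^*,C_0]$ is trace-class, hence compact. Both are legitimate, but the paper's choice has two advantages: it keeps the corollary an honest, self-contained generalization of \cite{G2023} rather than a consequence of it (the paper explicitly presents the corollary as generalizing that result, and \cite{G2023} is an unpublished preprint), and Berger--Shaw yields the strictly stronger conclusion that the self-commutator of $C_0$ (and hence, modulo compacts, the structure underlying $C_\mathscr{T}$) is trace-class rather than merely compact. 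Your remaining steps --- stability of essential normality under unitary conjugation, compact perturbation, and finite direct sums --- are routine and correctly justified.
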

\begin{proof} 
By Lemma~\ref{lem-decom}, 
\beq
\label{d-fold-ess-equi}
\mbox{$C_\mathscr{T}$ is essentially equivalent to the $d$-fold direct sum $C^{(d)}_0$ of $C_0,$} 
\eeq
where $d := \mathrm{card}(\mathrm{Chi}^{\langle k_{\mathscr{T}} \rangle}(\mathsf{root})) \in \mathbb N.$ 
Since $C_0$ is a cyclic subnormal operator (see Theorem~\ref{THM1}(c)), by the Berger-Shaw theorem (see \cite[Theorem~IV.2.1]{C1991}), $[C^*_0, C_0]$ is a trace-class operator. 
One may now apply 
\eqref{d-fold-ess-equi} and use the fact that a trace-class operator is compact.  
\end{proof}

\section{Two examples and concluding remarks}

In this short section, 
we show with the help of an example that there exists a locally finite rooted directed tree $\mathscr T$ of finite branching index such that the Ces\'aro operator $C_\mathscr T$ has norm as big as one may wish. 
\begin{example} \label{k-child-ex}
For a positive integer $k \Ge 2,$ consider now the directed tree $\mathscr T_k$ with set of vertices 
$$V:=\{(0,0)\}\cup\{(1,j), \ldots, (k,j), : j \Ge 1\}$$ and
$\mathsf{root}=(0,0)$. We further require that $\mathsf{Chi}(0,0)=\{(1,1),\ldots, (k,1)\}$ and 
\[\mathsf{Chi}(i,j)=\{(i,j+1)\}, \quad i = 1, \ldots, k, ~j \Ge 1. \]
Since $\mathrm{dep}(i, j)=j$ and $\text{par}^l(i, j)=(i, j-l)$ for $i=1, \ldots, k,$ $j \Ge 1$ and $0 \Le l \Le j,$ 
it follows that for any $f \in \mathcal D(C_{\mathscr{T}_k}),$ 
\beqn
(C_{\mathscr{T}_k}f)(0, 0) &=& f(0, 0), \\ (C_{\mathscr{T}_k}f)(i, j) &=&  
\frac{1}{j+1}\Big(f(0, 0)+ \sum\limits_{k=1}^{j} f(i, k)\Big), ~ i=1, \ldots, k, ~j \Ge 1.
\eeqn  
It is worth comparing the matrix representations of $C_{\mathscr T_1}$ (see Example~\ref{classical-example}) with that of $C_{\mathscr T_2}$ (with respect to the ordered basis $\{e_{(0, 0)}, e_{(1, 1)}, e_{(2, 1)}, e_{(1, 2)}, e_{(2, 2)}, \ldots \}$):
$$C_{\mathscr T_1} = 
	\begin{bmatrix}
		1 & 0 & 0 & 0 & 0 & \cdots \\
		\frac{1}{2} & \frac{1}{2} & 0 & 0 & 0 & \cdots\\
		\frac{1}{3} & \frac{1}{3} & \frac{1}{3} & 0 & 0 & \cdots\\
		\frac{1}{4} & \frac{1}{4} & \frac{1}{4} & \frac{1}{4} & 0 &\cdots\\
		\frac{1}{5} & \frac{1}{5} & \frac{1}{5} &\frac{1}{5} & \frac{1}{5} & \cdots\\
		\vdots & \vdots & \vdots & \vdots & \vdots &\ddots
	\end{bmatrix}, \quad C_{\mathscr T_2} = 
	\begin{bmatrix}
		1 & 0 & 0 & 0 & 0 & \cdots \\
		\frac{1}{2} & \frac{1}{2} & 0 & 0 & 0 & \cdots\\
		\frac{1}{2} & 0 & \frac{1}{2} & 0 & 0 &  \cdots\\
		\frac{1}{3} & \frac{1}{3} & 0 & \frac{1}{3} & 0 & \cdots\\
		\frac{1}{3} & 0 & \frac{1}{3} & 0 &\frac{1}{3} & \cdots\\
		\vdots & \vdots & \vdots & \vdots & \vdots &\ddots
	\end{bmatrix}.$$
An inspection of these matrices suggests that although both these matrices have the same set of entries, in the latter case, each branch of the directed tree replicates a row from $C_{\mathscr T_1}$ more than once with ``additional" zeros with a particular pattern (cf. \cite[Eq.\,(2.5)]{MR2023}).  

To estimate the norm of $C_{\mathscr T_k},$ note that by \eqref{action-C},
\beqn
C_{\mathscr T_k}(e_{(0, 0)}) = e_{(0, 0)} + \sum_{j=1}^{\infty}\frac{1}{j+1}\sum_{i=1}^k e_{(i, j)},
\eeqn
and hence 
\beqn
\|C_{\mathscr T_k}(e_{(0, 0)}\|^2  = 1 + k\big(\frac{\pi^2}{6}-1\big).
\eeqn
In particular, unlike the case of $C_{\mathscr T_1}$ (see Theorem~\ref{THM1}), we have $\|C_{\mathscr T_k}\| > 2$ provided $k \Ge 5.$ Since the spectral radius of $C_{\mathscr T_k}$ is always $2$ (see Lemma~\ref{spectrum}), $C_{\mathscr T_k}$ is not normaloid for $k \Ge 5$. 
\eof
\end{example}

A careful examination of Example~\ref{k-child-ex} shows that
for any locally finite rooted directed tree $\mathscr T$ of finite branching index, the norm of the Ces\'aro operator $C_\mathscr T$ satisfies
\beqn
\|C_{\mathscr T}\| \Ge \sup_{w \in V_{\prec}} \text{card}(\text{Chi}(w)) \Big(\sum_{j=\text{dep}(w)+1}^{\infty} \frac{1}{j^2}\Big)^{\frac{1}{2}}.
\eeqn
It would be interesting to find a handy formula for the norm of $C_{\mathscr T}$ for a locally finite rooted directed tree $\mathscr T$ of finite branching index. When $C_{\mathscr T}$ is normaloid? Another unanswered question is whether the point spectrum of $C^*_{\mathscr T}$ contains any point from the complement of $\mathbb D_1(1)$?

The following example shows that the assumption that $\mathscr T$ is a narrow directed tree can not be relaxed.
\begin{figure}[H]
\begin{center}
\begin{tikzpicture}[
dot/.style = {circle, fill=black, inner sep=1.8pt},
>=Stealth, 
node distance=1cm
]
\coordinate (v0)  at (-4,-2);
\coordinate (v1)  at (-2,-2);
\coordinate (v2)  at (0,-2);
\coordinate (v3)  at (2,-2);
		
		\coordinate (w11) at (-2,-4);
		\coordinate (w21) at (0,-4);
		\coordinate (w22) at (0,0);
		\coordinate (w31) at (2,-4);
		\coordinate (w32) at (4,0);
		\coordinate (w33) at (2,0);
		
		\draw[->, thick] (v0) -- (v1);
		\draw[->, thick] (v1) -- (v2);
		\draw[->, thick] (v2) -- (v3);
		
		\draw[->, thick] (v1) -- (w11);
		\draw[->, thick] (v2) -- (w21);
		\draw[->, thick] (v2) -- (w22);
		\draw[->, thick] (v3) -- (w31);
		\draw[->, thick] (v3) -- (w32);
		\draw[->, thick] (v3) -- (w33);
		
		\draw[->, thick, dashed] (v3) -- ++(2,0);
		
		\foreach \p in {v0,v1,v2,v3,w11,w21,w22,w31,w32,w33}
		\node[dot] at (\p) {};
		
		\node[below right]       at (v0) {$v_0$};
		\node[below right]       at (v1) {$v_1$};
		\node[below right] at (v2) {$v_2$};
		\node[below right]       at (v3) {$v_3$};
		
		\node[left] at (w11) {$w_{11}$};
		\node[left] at (w21) {$w_{21}$};
		\node[left]  at (w22) {$w_{22}$};
		
		\node[left] at (w31) {$w_{31}$};
		\node[left] at (w32) {$w_{32}$};
		\node[left] at (w33) {$w_{33}$};
		
	\end{tikzpicture}
	\caption{A rooted directed tree which is not narrow} 
\label{fig1}
\end{center}
\end{figure}

\begin{example} \label{Chi-n-order-1}
Let $V = \{v_n : n \in \mathbb Z_+\} \cup \{w_{n,j}:n\in \mathbb N \text{ and } 1 \Le j \Le n\}.$ Consider the rooted directed tree $\mathscr T=(V, E),$ where $E$ is governed by
\beqn
\text{Chi}(v_0)=\{v_{1}\}, \quad
\text{Chi}(v_n)=\{v_{n+1}\} \cup \{w_{n,j}:n\in \mathbb N \text{ and } 1 \Le j \Le n\}, \, j \in \mathbb N
\eeqn
(see Figure~\ref{fig1}).
For $f \in \ell^2(V)$ and $n \in \mathbb Z_+$, let $K_n = \sum\limits_{j = 0}^n f(v_j).$ 
Observe that 
for $1 \Le j \Le n,$ 
\begin{align*}
(C_{\mathscr{T}} f) (w_{n,j}) & = \frac{1}{\operatorname{depth} (w_{n,j})+1} \sum\limits_{j=0}^{\operatorname{depth} (w_{n,j})} f(\operatorname{par}^j (w_{n,j}))\\
& = \frac{1}{n+2} \Big(f(w_{n,j}) + \sum\limits_{j=0}^{n} f(v_j)\Big)\\
& = \frac{f(w_{n,j}) + K_n}{n+2}.
\end{align*}
Therefore, we see that
\beq \label{norm-exp-ex2}
\|C_{\mathscr{T}} f\|^2 
& \Ge & 
\sum\limits_{n=1}^{\infty} \sum\limits_{j=1}^{n} \Bigg|\frac{f(w_{n,j}) + K_n}{n+2}\Bigg|^2.
\eeq
Consider the function $f \in \ell^2(V)$ given by
\beqn
f(v)=\begin{cases} \frac{1}{n} & \mbox{if~}v=v_n, \\
0 & \mbox{otherwise}.
\end{cases}
\eeqn
It now follows from \eqref{norm-exp-ex2} that
\beqn
\|C_{\mathscr{T}} f\|^2 \Ge \sum\limits_{n=1}^{\infty} \sum\limits_{j=1}^{n} \Bigg|\frac{K_n}{n+2}\Bigg|^2 = 
\sum\limits_{n=1}^{\infty} n\Bigg|\frac{K_n}{n+2}\Bigg|^2,
\eeqn
which is divergent.
This shows that $C_{\mathscr{T}}$ is not a bounded linear operator on $\mathscr T.$ 
\eof
\end{example}


Let $\mathscr{T} = (V, E)$ be a directed tree. For $\alpha \in [0, \infty),$ set
\beqn M_{\alpha, j} := \frac{\mathrm{card}(\mathrm{Chi}^{\langle j \rangle}(\mathsf{root}))}{(j+1)^{\alpha}}, \quad j \Ge 0,
\eeqn
and note the following:
\begin{enumerate}
\item[$\bullet$] If $C_\mathscr T$ is a bounded linear operator, then by \eqref{bdd-limit0}, $\lim_{j \rar \infty} M_{2, j}=0.$  
\item[$\bullet$] We have seen in Theorem~\ref{main-thm} that if $\sup_{j \Ge 0}M_{0, j} < \infty$, then the Ces\'aro operator $C_\mathscr T$ is bounded. 
\item[$\bullet$] In Example~\ref{Chi-n-order-1}, we construct a rooted directed tree with $\sup_{j \Ge 0}M_{1, j} < \infty$ for which $C_\mathscr T$ is unbounded. 
\end{enumerate}
This leads us to ask: under the assumption that there exists some $\alpha  \in (0, 1)$
 for which $\sup_{j \Ge 0}M_{\alpha, j} < \infty$,
is the operator $C_\mathscr T$
on the rooted directed tree 
$\mathscr T$ necessarily bounded?

\end{document}